\newcommand\version{October 05, 2023}
\newtheorem{theorem}{Theorem}[section]
\newtheorem{proposition}[theorem]{Proposition}
\newtheorem{lemma}[theorem]{Lemma}
\theoremstyle{definition}
\theoremstyle{remark}
\newtheorem{remark}[theorem]{Remark}
\numberwithin{equation}{section}
\renewcommand{\epsilon}{\varepsilon}
\newcommand{\R}{\mathbb{R}}
\newcommand*\diff{\mathop{}\!\mathrm{d}} 
\newcommand{\xl}{{x, \lambda}}
\newcommand{\Ds}{{(-\Delta)^{s}}}
\newcommand{\ds}{{(-\Delta)^{s/2}}}
\DeclareMathOperator{\dist}{dist}
\begin{document}

\title{Stability for the Sobolev inequality: existence of a minimizer}

\author{Tobias K\"onig}
\address[Tobias K\"onig]{Institut für Mathematik, 
Goethe-Universität Frankfurt, 
Robert-Mayer-Str. 10, 60325 Frankfurt am Main, Germany}
\email{koenig@mathematik.uni-frankfurt.de}

\date{\version}
\thanks{\copyright\, 2023 by the author. This paper may be reproduced, in its entirety, for non-commercial purposes.}

\maketitle

\begin{abstract}
We prove that the stability inequality associated to Sobolev's inequality and its set of optimizers $\mathcal M$ and given by
\[ \frac{\|\nabla f\|_{L^2(\mathbb R^d)}^2  - S_d \|f\|_{L^\frac{2d}{d-2}(\mathbb R^d)}^2}{ \inf_{h \in \mathcal M} \|\nabla (f -  h)\|_{L^2(\mathbb R^d)}^2 } \geq c_{BE} > 0 \qquad \text{ for every } f \in \dot{H}^1(\mathbb R^d),\]
which is due to Bianchi and Egnell, admits a minimizer for every $d \geq 3$. Our proof consists in an appropriate refinement of a classical strategy going back to Brezis and Lieb. As a crucial ingredient, we establish the strict inequality $c_{BE} < 2 - 2^\frac{d-2}{d}$, which means that a sequence of two asymptotically non-interacting bubbles cannot be minimizing. Our arguments cover in fact the analogous stability inequality for the fractional Sobolev inequality for arbitrary fractional exponent $s \in (0, d/2)$ and dimension $d \geq 2$. 
\end{abstract}

\section{Introduction and main results}

The famous stability inequality due to Bianchi and Egnell \cite{BiEg} states that, for dimension $d \geq 3$, there is a constant $c_{BE} > 0$ such that 
\begin{equation}
\label{bianchi-egnell s=1}
\frac{\lVert \nabla f\rVert  _{L^2(\R^d)}^2  - S_d \lVert f\rVert  _{L^\frac{2d}{d-2}(\R^d)}^2}{ \inf_{h \in \mathcal M} \lVert \nabla (f -  h)\rVert  _{L^2(\R^d)}^2 } \geq c_{BE}
 \qquad \text{ for all } f \in \dot{H}^1(\R^d)  \setminus \mathcal M.
\end{equation}
Here, 
\begin{equation}
\label{M definition}
\mathcal M := \left\{x \mapsto c (a + |x-b|^2)^{-\frac{d-2}{2}} \, : \, a > 0, \, b \in \R^d, \, c \in \R \setminus \{0\} \right\}
\end{equation}
is the $(d+2)$-dimensional manifold of \emph{Talenti bubbles}, i.e., optimizers of the Sobolev inequality 
\begin{equation}
\label{Sobolev s = 1}
\lVert \nabla f\rVert  _{L^2(\R^d)}^2 \geq S_d \lVert f\rVert  _{L^\frac{2d}{d-2}(\R^d)}^2
\end{equation} 
with sharp constant $S_d > 0$. Indeed, \eqref{bianchi-egnell s=1} makes a statement about the stability of the Sobolev inequality, in the sense that if the 'deficit' $\lVert \nabla f\rVert  _{L^2(\R^d)}^2  - S_d \lVert f\rVert  _{L^\frac{2d}{d-2}(\R^d)}^2$ is very small for some $f \in \dot{H}^1(\R^d)$, then $f$ must be very close to $\mathcal M$, in a quantitative fashion.

While the power two in the denominator of the left side of \eqref{bianchi-egnell s=1} is well known to be optimal, it is a long-standing open question to determine the value of the best constant $c_{BE}$ in \eqref{bianchi-egnell s=1}, see, e.g., the up-to-date surveys \cite{DoEs, Frank}. Indeed, the proof of \eqref{bianchi-egnell s=1} in \cite{BiEg} proceeds by compactness and therefore does not yield any positive lower bound on the value of $c_{BE}$.  Only very recently the first explicit lower bound on $c_{BE}$ has been established in the remarkable paper \cite{DoEsFiFrLo}. We also refer to \cite{BoDoNaSi, BrDoSi} for explicit constants in similar stability inequalities and to \cite{ChLuTa} for a related abstract result.

A key step towards determining the value of $c_{BE}$, which is explicitly mentioned as an open problem in \cite{DoEsFiFrLo}, consists in establishing the existence of an optimizer for \eqref{bianchi-egnell s=1}. To achieve this, one needs to investigate the behavior of a minimizing sequence $(f_n)$ for \eqref{bianchi-egnell s=1}. Must $(f_n)$ converge towards a minimizer, or,  on the contrary, is the optimal value of $c_{BE}$ only attained asymptotically along a certain sequence $(f_n)$ with zero weak limit in $\dot{H}^s(\R^d)$? 

We shall prove as a main result of this paper that the first alternative always holds. In particular, \emph{the Bianchi-Egnell inequality \eqref{bianchi-egnell s=1} always admits a minimizer}.

\subsection{Compactness vs. non-compactness of minimizing sequences. }
The fact that the existence of a minimizer for \eqref{bianchi-egnell s=1} cannot be proved in a straightforward way has to do with two natural scenarios for the behavior of minimizing sequences which could both prevent existence of a minimizer for $c_{BE}$. 

The simpler one of these scenarios consists of minimizing sequences $(f_n)$ that converge towards $\mathcal M$. (Since the quotient in \eqref{bianchi-egnell s=1} is ill-defined  for $f \in \mathcal M$, the limit of such sequences, even if non-zero, is not a minimizer.) The optimal value associated to this type of sequences can be obtained in terms of a spectral problem already analyzed in \cite{BiEg}, see also \cite{Rey}. It is given by $c_{BE}^\text{spec} := \frac{4}{d+4}$. In the recent article \cite{Koenig}, the author has excluded such behavior for minimizing sequences by showing the strict inequality $c_{BE} < c_{BE}^\text{spec}$. 

There is, however, another plausible scenario for non-compact minimizing sequences, namely a sequence $(u_n)$ consisting of two Talenti bubbles which are asymptotically non-interacting by virtue of having different concentration behavior and/or being centered far apart from each other. A back-of-the-envelope calculation, which can be made rigorous as in Section \ref{section strict inequality proof}, shows that for a configuration of two bubbles having equal mass and center, and diverging concentration rates, the quotient in \eqref{bianchi-egnell s=1} reaches the value $c_{BE}^\text{2-peak} := 2 - 2^\frac{d-2}{d}$ in the limit. (It can be checked that other model configurations built from non-interacting Talenti bubbles, including ones that involve more than two bubbles, do not yield a smaller value of the Bianchi-Egnell quotient.) A side remark one can make here is that, somewhat surprisingly, the question whether one or two bubbles yield a lower value in \eqref{bianchi-egnell s=1} turns out to depend on the dimension. Namely, $c_{BE}^\text{spec} < c_{BE}^\text{2-peak}$ for $3 \leq d \leq 6$, while $c_{BE}^\text{spec} > c_{BE}^\text{2-peak}$ for $d \geq 7$. 

Similarly to the strict inequality $c_{BE} < c_{BE}^\text{spec}$ from \cite{Koenig}, we will prove in Theorem \ref{theorem strict ineq 2 bubbles} that $c_{BE} < c_{BE}^\text{2-peak}$ strictly. Thus the two-bubble configurations described above cannot be minimizing sequences either. It turns out that the conjunction of these \emph{two} strict inequalities is enough to enforce the existence of a minimizer. This is the content of Theorem \ref{theorem minimizer} below.

Inequality \eqref{bianchi-egnell s=1} forms part of a wider class of geometric-type stability inequalities. Some celebrated quantitative stability results concern the Sobolev inequality for $\dot{W}^{1,p}$ \cite{CiFuMaPr, FiNe, FiZh}, the isoperimetric inequality \cite{FuMaPr}, the logarithmic Sobolev inequality \cite{BoGoRoSa, FaInLe} or the lowest eigenvalue of Schrödinger operators \cite{CaFrLi}. For most of these the existence of an optimal function is not known to date, let alone the explicit value of the stability constant corresponding to $c_{BE}$. A positive result one can mention here concerns the planar ($d=2$) isoperimetric inequality. For this case, an optimal set for the associated stability inequality is shown to exist in \cite{BiCrHe}. 

\subsection{Main results}

Since all of our arguments work identically for any fractional order $s \in (0, d/2)$, we shall state and prove our main results in this more general situation. That is, for any $d \geq 1$ and $s \in (0, d/2)$ we consider the fractional inequality of Bianchi-Egnell-type 
\begin{equation}
\label{bianchi-egnell}
 \inf_{f \in \dot{H}^s(\R^d) \setminus \mathcal M} \mathcal E(f) =: c_{BE}(s) > 0,
\end{equation}
for 
\begin{equation}
\label{E definition}
\mathcal E(f) := \frac{\lVert \ds f\rVert  _2^2 -S_d \lVert f\rVert  _{2^*}^2}{\dist(f, \mathcal M)^2}, 
\end{equation}
which was proved in \cite{ChFrWe}. Here and in the rest of the paper, we abbreviate $\lVert \cdot\rVert  _{L^p(\R^d)} = \lVert \cdot\rVert  _p$ and let 
\[ 2^* := \frac{2d}{d-2s} \]
be the critical Sobolev exponent. We denote by 
\[ \mathcal M := \left\{ x \mapsto c (a + |x-b|^2)^{-\frac{d-2s}{2}} \, : \, a > 0, \, b \in \R^d, \, c \in \R \setminus \{0\} \right \} \]
the manifold of fractional Talenti bubbles, i.e. optimizers of the Sobolev inequality 
\begin{equation}
\label{sobolev quotient definition}
\mathcal S(f) := \frac{\lVert \ds f\rVert  _2^2}{\lVert f\rVert  _{2^*}^2} \geq S_d > 0. 
\end{equation}
with sharp constant $S_d$. The homogeneous Sobolev space $\dot{H}^s(\R^d)$ is the completion of $C^\infty_0(\R^d)$ with respect to the norm $\lVert \ds f\rVert  _2$. See, e.g., \cite{Gerard} for some more details about $\dot{H}^s(\R^d)$ and the fractional Laplacian $(-\Delta)^s$. We will always consider $\dot{H}^s(\R^d)$ to be equipped with that norm.  Finally, in \eqref{bianchi-egnell} and henceforth we employ the notation 
\[ \dist(f, \mathcal M) := \inf_{h \in \mathcal M} \lVert \ds (f - h)\rVert  _2 \] 
for the distance in $\dot{H}^s(\R^d)$ between $f$ and $\mathcal M$. (Since the value of $s \in (0, d/2)$ can be considered as fixed throughout, we choose to not include the parameter $s$ into the notation for $\mathcal M$, $S_d$, $2^*$ etc. in order to keep a lighter notation, with the exception of $c_{\mathrm{BE}}(s)$.) 

Our first main result is the strict inequality with respect to the constant coming from two non-interacting bubbles. 

\begin{theorem}
\label{theorem strict ineq 2 bubbles}
For every $d \geq 1$ and $s \in (0, d/2)$, one has $c_{\mathrm{BE}}(s) < 2 - 2^{\frac{d-2s}{d}}$. 
\end{theorem}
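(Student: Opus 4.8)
The plan is to construct an explicit test configuration $f = W_1 + W_2$ consisting of two Talenti bubbles which are ''almost'' non-interacting, but with a small, carefully chosen amount of residual interaction, and to show that for such $f$ the quotient $\mathcal E(f)$ drops strictly below $2 - 2^{\frac{d-2s}{d}}$. The starting point is the back-of-the-envelope computation already referenced in the introduction: if one takes two bubbles with equal mass and center but concentration rates in ratio $R \to \infty$, then in the strict limit of no interaction the numerator $\lVert \ds f\rVert_2^2 - S_d \lVert f\rVert_{2^*}^2$ behaves like $(2 - 2^{\frac{d-2s}{d}}) S_d \lVert W_1\rVert_{2^*}^2 (1 + o(1))$ while $\dist(f, \mathcal M)^2 = S_d \lVert W_1\rVert_{2^*}^2 (1 + o(1))$, giving the value $2 - 2^{\frac{d-2s}{d}}$ in the limit. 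The point is that the approach to this limit is not monotone from above: one should compute the first-order correction in the (small) bubble interaction and show it has the right sign.

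Concretely, I would fix a large parameter and write $f_\tau = W + \tau Z$ or, better, $f = \mu W_{\lambda_1} + W_{\lambda_2}$ with $\lambda_2 / \lambda_1 = R$ large and $\mu$ a free mass parameter close to $1$, and expand all three quantities $N(f) := \lVert \ds f\rVert_2^2 - S_d\lVert f\rVert_{2^*}^2$, $D(f) := \dist(f,\mathcal M)^2$, and hence $\mathcal E(f)$, in powers of the small interaction quantity
\[ \varepsilon := \int_{\R^d} (\ds W_{\lambda_1})(\ds W_{\lambda_2}) \asymp R^{-\frac{d-2s}{2}} \to 0. \]
The cross term in $\lVert \ds f\rVert_2^2$ is exactly $2\mu\varepsilon$; the cross term in $\lVert f\rVert_{2^*}^2$ requires care because $2^* < 2$ when... (here $2^* = 2d/(d-2s) > 2$, so the nonlinearity is superquadratic) and one must use the standard expansion of $\lVert a + b\rVert_{2^*}^{2^*}$ for nearly-disjoint bumps, as in Brezis–Lieb and Rey. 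The key is that the leading interaction correction to $\mathcal E(f)$ is of the form $-c_0 \varepsilon + o(\varepsilon)$ with $c_0 > 0$, i.e. a genuine gain, possibly after also optimizing over the mass ratio $\mu$ (choosing $\mu$ slightly away from $1$ can help make the interaction term favorable). Computing $D(f)$ to the needed order is where one invokes that $\mathcal M$ is a smooth manifold and that the orthogonal projection of $f$ onto $\mathcal M$ is, to leading order, the larger bubble, with corrections controlled by the nondegeneracy of the second variation (the spectral gap from \cite{BiEg}).

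The main obstacle I expect is bookkeeping the sign and size of the first-order interaction correction to $\mathcal E(f)$, in particular the interplay between the correction to the numerator $N(f)$ and the correction to the denominator $D(f)$: both receive $O(\varepsilon)$ contributions, and one must show the net effect on the ratio is negative. This is delicate because the naive leading terms of $N$ and $D$ are proportional (with ratio $2 - 2^{\frac{d-2s}{d}}$), so the sign of $\mathcal E(f) - (2 - 2^{\frac{d-2s}{d}})$ is governed entirely by the $O(\varepsilon)$ corrections, and the positive contribution $2\mu\varepsilon$ to the Dirichlet cross term must be beaten by the (larger, negative) contribution coming from the $L^{2^*}$ interaction and from the shift of the nearest point of $\mathcal M$. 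I would handle this by expanding everything to order $\varepsilon$ uniformly, keeping the mass parameter $\mu$ free, and checking that for a suitable choice of $\mu = \mu(R)$ (or even $\mu = 1$) the coefficient of $\varepsilon$ in $\mathcal E(f) - (2 - 2^{\frac{d-2s}{d}})$ is strictly negative for all large $R$; a secondary technical point is justifying the asymptotics of the interaction integrals $\int W_{\lambda_1}^{2^*-1} W_{\lambda_2}$ and $\int W_{\lambda_1} W_{\lambda_2}^{2^*-1}$ in all dimensions and for all $s \in (0, d/2)$, which is standard but must be done with the correct exponents.
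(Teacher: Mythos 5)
Your proposal follows the same route as the paper's proof, which takes $u_\lambda = B + B_\lambda$ (your configuration with $\mu = 1$; the extra mass parameter is not needed) and expands numerator, denominator, and quotient to first order in $\varepsilon \asymp \lambda^{(d-2s)/2}$. The one part of your picture to adjust: the denominator satisfies $\dist(u_\lambda,\mathcal M)^2 = S_d + o(\varepsilon)$ with \emph{no} net $O(\varepsilon)$ term — writing $\dist^2 = \lVert (-\Delta)^{s/2} u_\lambda\rVert_2^2 - S_d\,\mathsf m(u_\lambda)$, the Dirichlet cross term is exactly cancelled by the first-order gain in $\mathsf m(u_\lambda)$, and the shift of the optimal projection away from $B$ contributes only at order $\varepsilon^2$ because of the stationarity $F'(1)=0$ — so the strict inequality is driven entirely by the numerator, whose $O(\varepsilon)$ coefficient is $2(1-2^{2/2^*})S_d c_0 < 0$, i.e. the $L^{2^*}$ interaction beats the Dirichlet cross term as you anticipated, and no help from a denominator correction is needed or available.
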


The proof of Theorem \ref{theorem strict ineq 2 bubbles} consists in an asymptotic expansion of a sum $u_\lambda$ of two Talenti bubbles supported on length scales $1$ and $\lambda^{-1}$ respectively, as $\lambda \to 0$. It can be verified that $\mathcal E(u_\lambda) \to  2 - 2^{\frac{d-2s}{d}}$ as $\lambda \to 0$, and that the first lower-order correction comes with a negative sign. This approach bears some similarity with that used in \cite{Koenig} to prove the one-bubble inequality $c_{\mathrm{BE}}(s) < \frac{4s}{d+2s+2}$ for $d \geq 2$, but the details of the required arguments and computations are entirely different. We give the proof of Theorem \ref{theorem strict ineq 2 bubbles} in Section \ref{section strict inequality proof} below. 

As a consequence of Theorem \ref{theorem strict ineq 2 bubbles}, together with some further analysis, we obtain that all minimizing sequences for \eqref{bianchi-egnell} must converge towards a non-trivial minimizer.

\begin{theorem}
\label{theorem minimizer}
Let $d \geq 2$ and $s \in (0, \frac{d}{2})$.  
Let $(u_n)$ be a minimizing sequence for \eqref{bianchi-egnell} with $\lVert \ds u_n\rVert  _2 = 1$. Then there is $u \in \dot{H}^s(\R^d) \setminus \mathcal M$ such that, up to extracting a subsequence, $u_n \to u$ strongly in $\dot{H}^s(\R^d)$. Moreover, $\mathcal E(u) = c_{\mathrm{BE}}(s)$, i.e., $u$ is a minimizer for $c_{\mathrm{BE}}(s)$. 
\end{theorem}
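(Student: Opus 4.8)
The plan is to follow the Brezis–Lieb concentration-compactness strategy, but refined so that the two strict inequalities $c_{\mathrm{BE}}(s) < c_{BE}^{\mathrm{spec}} = \frac{4s}{d+2s+2}$ (from \cite{Koenig}) and $c_{\mathrm{BE}}(s) < 2 - 2^{\frac{d-2s}{d}}$ (Theorem \ref{theorem strict ineq 2 bubbles}) are exactly what rules out the two non-compact scenarios. Given a minimizing sequence $(u_n)$ with $\lVert \ds u_n \rVert_2 = 1$, I would first normalize using the noncompact symmetry group: replace $u_n$ by a suitable dilation/translation (and note the quotient $\mathcal E$ is invariant under these as well as under scalar multiplication) so that the near-bubble $h_n \in \mathcal M$ realizing $\dist(u_n, \mathcal M)$ is centered at $0$ at unit scale, and write $u_n = h_n + r_n$ with $\lVert \ds r_n \rVert_2 = \dist(u_n, \mathcal M) =: \epsilon_n$ and $r_n \perp T_{h_n}\mathcal M$ in $\dot H^s$. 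One must separate two regimes: (i) $\epsilon_n \to 0$, the "converging to $\mathcal M$" regime, and (ii) $\epsilon_n \not\to 0$ (up to subsequence $\epsilon_n \geq \epsilon_0 > 0$), the "genuine" regime.

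In regime (i), the argument is essentially the second-order Taylor expansion of the Sobolev deficit around $\mathcal M$, as in \cite{BiEg}: one shows $\mathcal E(u_n) = \mathcal E(h_n + r_n) \to$ (infimum of the quadratic form coming from the Hessian of the Sobolev quotient restricted to the orthogonal complement of $T\mathcal M$) $= c_{BE}^{\mathrm{spec}}$. Since $c_{\mathrm{BE}}(s) < c_{BE}^{\mathrm{spec}}$ strictly, such a sequence cannot be minimizing, so we may assume we are in regime (ii). Now $u_n$ is bounded in $\dot H^s$, so up to a subsequence $u_n \rightharpoonup u$ weakly, with $h_n \rightharpoonup h$ for some $h \in \overline{\mathcal M} = \mathcal M \cup \{0\}$. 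The decisive dichotomy is whether $u \neq 0$ or $u = 0$. Using the profile decomposition / Brezis–Lieb lemma for the terms $\lVert \ds u_n \rVert_2^2$ and $\lVert u_n \rVert_{2^*}^2$ together with a Brezis–Lieb-type splitting of the distance functional $\dist(\cdot, \mathcal M)^2$ — this last splitting being the technical heart — one wants to show that if $u = 0$ then the entire "mass" escapes into at most one bubble profile, and in that situation $u_n$ is asymptotically a sum of non-interacting bubbles (the near-bubble $h_n$ escaping plus what remains), forcing $\liminf \mathcal E(u_n) \geq 2 - 2^{\frac{d-2s}{d}}$; but $c_{\mathrm{BE}}(s) < 2 - 2^{\frac{d-2s}{d}}$, contradiction. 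Hence $u \neq 0$.

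Once $u \neq 0$ is secured, the remaining task is to upgrade weak convergence to strong convergence and to check $u \notin \mathcal M$. Writing $u_n = u + v_n$ with $v_n \rightharpoonup 0$, one applies Brezis–Lieb to numerator and (carefully) denominator to get a splitting of the form $\mathrm{deficit}(u_n) = \mathrm{deficit}(u) + \mathrm{deficit}(v_n) + o(1)$ and $\dist(u_n,\mathcal M)^2 = \dist(u,\mathcal M)^2 + \lVert \ds v_n \rVert_2^2 + o(1)$ (the orthogonality to $T_{h_n}\mathcal M$ and $h_n \to h$ being used here). A convexity/superadditivity argument on the quotient — of the type $\frac{a_1 + a_2}{b_1 + b_2} \geq \min\left(\frac{a_1}{b_1}, \frac{a_2}{b_2}\right)$, combined with $\mathrm{deficit}(u) \geq c_{\mathrm{BE}}(s)\dist(u,\mathcal M)^2$ and $\mathrm{deficit}(v_n) \geq c_{\mathrm{BE}}(s)\lVert \ds v_n\rVert_2^2 - o(1)$ since $v_n$ is "almost" admissible — then forces $\dist(v_n, \mathcal M) \to 0$, hence (since $v_n \rightharpoonup 0$ and $0 \in \overline{\mathcal M}$ is the only weak limit available) $\lVert \ds v_n \rVert_2 \to 0$, i.e. $u_n \to u$ strongly. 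Strong convergence plus $\epsilon_n \geq \epsilon_0$ gives $\dist(u, \mathcal M) \geq \epsilon_0 > 0$, so $u \notin \mathcal M$, and continuity of $\mathcal E$ on $\dot H^s \setminus \mathcal M$ gives $\mathcal E(u) = c_{\mathrm{BE}}(s)$.

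The main obstacle I anticipate is the Brezis–Lieb-type decomposition of the distance functional $\dist(\cdot,\mathcal M)^2$: unlike the $L^{2^*}$ norm, $\dist(\cdot,\mathcal M)$ is defined by an infimum over the non-compact manifold $\mathcal M$, so one cannot directly invoke the classical Brezis–Lieb lemma. The delicate point is controlling where the optimal bubble $h_n$ for $u_n$ sits relative to the fixed bubble realizing $\dist(u,\mathcal M)$: one must show the near-bubbles do not themselves split or run off in an uncontrolled way, which requires the qualitative fact (again using $\epsilon_n$ not too small, or a bootstrap from regime (i)) that $h_n$ stays in a compact region of $\mathcal M$ modulo the obvious symmetries, so that $h_n \to h \in \mathcal M$ (not to $0$) after the normalization. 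Handling the borderline case where $h$ could a priori be $0$ — which is precisely where the two-bubble constant enters — is the subtle part and is where Theorem \ref{theorem strict ineq 2 bubbles} does its work.
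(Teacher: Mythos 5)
Your proposal correctly identifies the two exclusion constants $c_{\mathrm{BE}}^{\mathrm{spec}}=\frac{4s}{d+2s+2}$ and $c_{\mathrm{BE}}^{\text{2-peak}}=2-2^{2/2^*}$, and the regime (i) analysis (using the spectral constant to rule out $\dist(u_n,\mathcal M)\to 0$) matches the paper's Proposition \ref{proposition minimizer exists}. But there is a genuine gap in where you place the two-bubble constant, and it is fatal.

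You invoke Theorem \ref{theorem strict ineq 2 bubbles} only to prove $u\neq 0$. That step is in fact automatic: by Lions' concentration-compactness one may translate/rescale a minimizing sequence to have a nonzero weak limit (which is what the paper does), and under your own ``center $h_n$'' normalization $u=h+r$ with $\langle r,h\rangle_{\dot H^s}=0$ and $h\neq 0$, so $u\neq 0$ comes for free. The place Theorem \ref{theorem strict ineq 2 bubbles} is actually indispensable is the step you attempt with the superadditivity inequality $\frac{a_1+a_2}{b_1+b_2}\geq\min(\frac{a_1}{b_1},\frac{a_2}{b_2})$ -- and that step, as written, does not close. Suppose $u_n=B+B_{\lambda_n}$ with $\lambda_n\to\infty$; then $u=B$, $v_n=B_{\lambda_n}\rightharpoonup 0$, all your Brezis--Lieb splittings hold, yet $\lVert\ds v_n\rVert_2$ does not go to zero. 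Your superadditivity step only yields $\mathrm{deficit}(v_n)\leq c_{\mathrm{BE}}\lVert\ds v_n\rVert_2^2+o(1)$, which is perfectly consistent with $v_n$ being a bubble (where the deficit is zero). Nothing in the $u\neq 0$ branch of your argument can rule out this two-bubble scenario, because the two-bubble constant is never used there. A secondary error is the claimed bound $\mathrm{deficit}(v_n)\geq c_{\mathrm{BE}}(s)\lVert\ds v_n\rVert_2^2-o(1)$: the Bianchi--Egnell inequality only gives $\mathrm{deficit}(v_n)\geq c_{\mathrm{BE}}(s)\dist(v_n,\mathcal M)^2$, which is weaker since $\dist(v_n,\mathcal M)\leq\lVert\ds v_n\rVert_2$.

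The paper's route through the distance functional is also structurally different from yours. You posit an additive splitting $\dist(u_n,\mathcal M)^2=\dist(u,\mathcal M)^2+\lVert\ds v_n\rVert_2^2+o(1)$; the paper shows (Lemma \ref{lemma max m}, via the reformulation $\dist(f,\mathcal M)^2=\lVert\ds f\rVert_2^2-S_d\,\mathsf m(f)$) that what actually holds in general is $\mathsf m(u_n)=\max\{\mathsf m(f),\mathsf m(g_n)\}+o(1)$, i.e.\ the near-bubble may belong to either profile, and it is only \emph{after} a separate argument using the minimizing property plus the strict convexity of $t\mapsto(1+t^{2^*/2})^{2/2^*}$ and the quotient lemma (Lemmas \ref{lemma convexity}, \ref{lemma quotient inequalities}, \ref{lemma m(f)=m(g)}) that one learns $\mathsf m(f)=\mathsf m(g_n)+o(1)$, which makes the two alternatives symmetric. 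That symmetry, together with the strict inequality $c_{\mathrm{BE}}<2-2^{2/2^*}$ inserted into the chain of quotient estimates, is precisely what forces $\mathcal S(g_n)<S_d$ -- a contradiction with the Sobolev inequality -- when $\lVert\ds g_n\rVert_2\not\to 0$. Your proposal contains neither the $\max$ identity, nor the Lemma \ref{lemma m(f)=m(g)}-type equalization of $\mathsf m(f)$ and $\mathsf m(g_n)$, nor an entry point for the two-bubble constant in the $u\neq 0$ analysis, and without those the argument cannot rule out escape of mass into a second bubble.
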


Let us give a brief overview over the main ideas of the proof of Theorem \ref{theorem minimizer}. Its basic strategy is similar to existence proofs for simpler functionals,  e.g. the Sobolev functional $\mathcal S(f) = \lVert \ds f\rVert  _2^2 / \lVert f\rVert  _{2^*}^2$, and goes back to Brezis' and Lieb's work \cite{BrLi}. After one has extracted a non-zero weak limit $f$ from a minimizing sequence $(f_n)$, a suitable convexity property of the functional together with the Brezis--Lieb lemma from \cite{BrLi} shows that the value of $\mathcal S(f_n)$ can be strictly improved unless the weak limit has full mass. The sequence $(f_n)$ being minimizing, a strict improvement is excluded, and hence the weak limit has full mass and is in fact a strong limit and a minimizer. 

In the present situation, this idea is harder to put into practice due to the more complicated structure of the Bianchi-Egnell functional \eqref{bianchi-egnell}. The main difficulty stems from the term $\dist(u_n, \mathcal M)^2$, which cannot be split 'symmetrically' under a decomposition $u_n = f + g_n$. This makes it less obvious to deduce a strict improvement of $\mathcal E(u_n)$ using convexity. To overcome this, we first find by some careful arguments (including convexity) that $f$ and $g_n$ must both be rescaled and translated bubbles of equal mass unless $g_n$ vanishes asymptotically. Then we can rule out $f$ and $g_n$ both being bubbles by the strict inequality from Theorem \ref{theorem strict ineq 2 bubbles}. Finally, $u_n$ cannot converge to $\mathcal M$ because of the strict inequality from \cite{Koenig}. (Note that the result from \cite{Koenig} is only valid for $d \geq 2$. This is the only place in the proof where we use this assumption.) We refer to the proof of Theorem \ref{theorem minimizer} and Remark \ref{remark two bubbles} for more details. 

This last step, which uses Theorem \ref{theorem strict ineq 2 bubbles} and \cite{Koenig}, reflects a widely known phenomenon in non-compact minimization problems going back to, at least, Lieb's lemma in the seminal paper by Brezis and Nirenberg \cite[Lemma 1.2]{BrNi}. The decisive observation is that, for many variational problems with some loss of compactness,  minimizing sequences do converge if they lie strictly below some universal energy threshold given by some limit problem. Remarkably, in the present context of the Bianchi-Egnell inequality, we find that  \emph{two} such compactness thresholds (i.e. the fact that $c_{\mathrm{BE}}$ is strictly smaller than both of them) need to be taken into account to prove existence of a minimizer, namely the value $c_{\mathrm{BE}}^\text{2-peak}(s) = 2 - 2^\frac{2}{2^*}$ and the value $c_{\mathrm{BE}}^\text{spec}(s) = \frac{4s}{d+2+2s}$. 

One can note that for this argument, at least when $s =1$, the independent proof of Theorem \ref{theorem strict ineq 2 bubbles} we give below is really only relevant for large dimensions $d$. Indeed, when $\frac{4s}{d+2+2s} < 2 - 2^\frac{d-2s}{d}$ (e.g. when $s = 1$ and $3 \leq d \leq 6$, as already observed above), then Theorem \ref{theorem strict ineq 2 bubbles} follows immediately from the standard spectral inequality $c_{\mathrm{BE}}(s) \leq \frac{4s}{d+2+2s}$. Conversely, when $\frac{4s}{d+2+2s} \geq 2 - 2^\frac{d-2s}{d}$ (e.g. when $s = 1$ and $d \geq 7$), then Theorem \ref{theorem strict ineq 2 bubbles} implies the result from \cite{Koenig}.

\section{Preliminaries}

We start by introducing some more notation. First, we denote the standard $L^{2^*}(\R^d)$-\emph{normalized} Talenti bubble centered at zero by 
\begin{equation}
\label{B definition normalized}
 B(x) = c_d (1 + |x|^2)^{-\frac{d-2s}{2}} 
\end{equation}
with $c_d > 0$ chosen such that $\lVert B\rVert  _{2^*} = 1$. For $\lambda > 0$, $x \in \R^d$, denote 
\[ B_\xl(y) = \lambda^\frac{d-2s}{2} B(\lambda(x-y)).  \]
If $x = 0$, we also write $B_\lambda = B_{0,\lambda}$. Notice that $\lVert B_\xl\rVert  _{2^*} = 1$, $\lVert \ds B_\xl\rVert  _2^2 = S_d$ and $(-\Delta)^s B_\xl = S_d B_\xl^{2^*-1}$ on $\R^d$ for all $x$ and $\lambda$. 

We denote by
\[ \mathcal M_1 = \left\{ B_\xl \, : \, x \in \R^d, \, \lambda > 0 \right\} \subset \mathcal M \]
the submanifold of $\mathcal M$ consisting of normalized Talenti bubbles.

The manifolds $\mathcal M$ and $\mathcal M_1$ are invariant under conformal transformations of $\R^d$, i.e., dilations, translations and inversions. For later reference we collect some explicit transformations in the following lemma, which can be verified by simple computation. 

\begin{lemma}[Conformal transformations of Talenti bubbles]
\label{lemma conformal trafos}
\begin{enumerate}[(i)]
\item Let $D_\mu(x) = \mu x$ be dilation by $\mu > 0$ and set $(D_\mu u)(x) = \mu^\frac{d-2s}{2} u(\mu x)$. Then $D_\mu B_\lambda = B_{\mu \lambda}$. 

\item Let $I_\tau(x) = \frac{\tau^2 x}{|x|^2}$ be the inversion about $\partial B(0, \tau)$ for some $\tau > 0$ and set $(I_\tau u)(x) = \left(\frac{\tau}{|x|}\right)^{d-2s} u\left( \frac{\tau^2 x}{|x|^2} \right)$. Then $I_\tau B_\lambda = B_{\tau^{-2} \lambda^{-1}}$.  
\end{enumerate}
\end{lemma}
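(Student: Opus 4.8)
The plan is to verify both identities by direct substitution into the definitions $B(x) = c_d(1+|x|^2)^{-\frac{d-2s}{2}}$ and $B_\lambda(y) = \lambda^{\frac{d-2s}{2}}B(\lambda y)$; nothing beyond careful bookkeeping of the exponents is needed, which is why the statement says it can be checked by ``simple computation''. For (i) I would simply chain the definitions: $(D_\mu B_\lambda)(x) = \mu^{\frac{d-2s}{2}}B_\lambda(\mu x) = (\mu\lambda)^{\frac{d-2s}{2}}B\big((\mu\lambda)x\big) = B_{\mu\lambda}(x)$, which settles it at once.

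For (ii) I would proceed in three short steps. First, using $|I_\tau(x)|^2 = \tau^4/|x|^2$, insert the explicit profile:
\[
B_\lambda\!\left(\frac{\tau^2 x}{|x|^2}\right) = c_d\lambda^{\frac{d-2s}{2}}\left(1+\frac{\lambda^2\tau^4}{|x|^2}\right)^{-\frac{d-2s}{2}} = c_d\lambda^{\frac{d-2s}{2}}|x|^{d-2s}\big(|x|^2+\lambda^2\tau^4\big)^{-\frac{d-2s}{2}}.
\]
Second, multiply by the conformal weight $(\tau/|x|)^{d-2s}$, whereupon the factor $|x|^{d-2s}$ cancels and one is left with $c_d(\tau^2\lambda)^{\frac{d-2s}{2}}\big(|x|^2+\lambda^2\tau^4\big)^{-\frac{d-2s}{2}}$. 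Third, expand the claimed target by pulling $(\tau^4\lambda^2)^{\frac{d-2s}{2}}$ out of the bracket in $B_{\tau^{-2}\lambda^{-1}}(x) = c_d(\tau^{-2}\lambda^{-1})^{\frac{d-2s}{2}}\big(1+\tau^{-4}\lambda^{-2}|x|^2\big)^{-\frac{d-2s}{2}}$ and simplify using $(\tau^{-2}\lambda^{-1})^{\frac{d-2s}{2}}(\tau^4\lambda^2)^{\frac{d-2s}{2}} = (\tau^2\lambda)^{\frac{d-2s}{2}}$; the result coincides with the expression from the second step, which proves the identity.

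The only point requiring attention — hardly an obstacle — is to keep the conformal weight exponent $d-2s$ (from the definition of $I_\tau u$) distinct from the bubble's scaling exponent $\frac{d-2s}{2}$ and to check that the powers of $|x|$ cancel exactly. As an alternative that avoids computation altogether, one could note that $D_\mu$ and $I_\tau$ are, up to the stated prefactors, the natural actions on $\dot{H}^s(\R^d)$ of a dilation and an inversion, for which the equation $(-\Delta)^s u = S_d u^{2^*-1}$ is invariant; then $D_\mu B_\lambda$ and $I_\tau B_\lambda$ are again positive solutions lying in $\mathcal M_1$, hence bubbles, and matching the scale (or the value at one point) identifies the parameter. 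I would nonetheless favour the direct substitution as the shorter route.
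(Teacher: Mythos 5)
Your proof is correct, and it is exactly the ``simple computation'' the paper alludes to (the paper states the lemma without proof). Part (i) chains definitions as you do, and for part (ii) your three-step bookkeeping — substituting $|I_\tau(x)|^2 = \tau^4/|x|^2$, cancelling $|x|^{d-2s}$ against the conformal weight, and matching with the expanded form of $B_{\tau^{-2}\lambda^{-1}}$ — checks out; your alternative remark via conformal invariance of the Euler–Lagrange equation is also valid, though indeed longer to make fully rigorous.
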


The next lemma gives a convenient reformulation of the distance $\dist(f, \mathcal M)$ in terms of a new optimization problem,
\begin{equation}
\label{m(f) definition}
\mathsf m(f):= \sup_{h \in \mathcal M_1} (f, h^{2^*-1})^2,
\end{equation} 
which can be considered as simpler since it is over the smaller set $\mathcal M_1$ and involves no derivative. Here, $(f, h^{2^*-1}) = \int_{\R^d} f h^{2^*-1} \diff x$ denotes the pairing between $L^{2^*}$ and its dual $(L^{2^*})'$. (Be aware that $\mathsf m(f)$ thus defined has nothing in common with the quantity denoted $\mathsf m(\nu)$ that appears in \cite{DoEsFiFrLo}.) We will mostly work with this reformulation when proving our main results below. 

\begin{lemma}
\label{lemma distance}
Let $f \in \dot{H}^s(\R^d)$. Then
\begin{equation}
\label{dist identity lemma}
\dist(f, \mathcal M)^2 = \lVert \ds f\rVert  _2^2 - S_d \,  \mathsf m(f). 
\end{equation} 
 
Moreover, $\dist(f, \mathcal M)$ is achieved. The function $(f, h^{2^*-1}) h$ optimizes $\dist(u, \mathcal M)$ if and only if $h \in \mathcal M_1$ optimizes $\mathsf m(f)$. 
\end{lemma}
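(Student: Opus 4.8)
The plan is to expand the square in $\dist(f,\mathcal M)^2 = \inf_{h \in \mathcal M} \lVert \ds(f-h)\rVert_2^2$ and exploit the Euler--Lagrange equation $(-\Delta)^s B_{\xl} = S_d B_{\xl}^{2^*-1}$. First I would reduce the infimum over $\mathcal M$ to an infimum over $\mathcal M_1$ together with an optimization over the scalar prefactor $c \in \R$. Writing a general element of $\mathcal M$ as $c\, h$ with $h \in \mathcal M_1$ and $c \in \R$, one has
\[
\lVert \ds(f - c h)\rVert_2^2 = \lVert \ds f\rVert_2^2 - 2c\, (\ds f, \ds h) + c^2 \lVert \ds h\rVert_2^2 = \lVert \ds f\rVert_2^2 - 2c\, S_d (f, h^{2^*-1}) + c^2 S_d,
\]
where I used $(\ds f, \ds h) = (f, (-\Delta)^s h) = S_d (f, h^{2^*-1})$ (valid by density of $C_0^\infty$ and the fact that $h = B_\xl$ solves the Euler--Lagrange equation) and $\lVert \ds h\rVert_2^2 = S_d$. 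Minimizing the resulting quadratic in $c$ gives optimal $c = (f, h^{2^*-1})$ and minimal value $\lVert \ds f\rVert_2^2 - S_d (f, h^{2^*-1})^2$. Taking the infimum over $h \in \mathcal M_1$ (equivalently, the supremum of $(f,h^{2^*-1})^2$) yields exactly \eqref{dist identity lemma}.

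The remaining point is that $\mathsf m(f)$ is actually attained, since then $\dist(f,\mathcal M)$ is attained by the function $(f, h^{2^*-1}) h$, and the stated characterization of optimizers follows immediately from the computation above. To prove attainment of the supremum in \eqref{m(f) definition}, I would take a maximizing sequence $h_n = B_{x_n, \lambda_n} \in \mathcal M_1$ and argue that the parameters $(x_n, \lambda_n)$ cannot escape to the boundary of the parameter space. The key estimate is Hölder's inequality, $|(f, h^{2^*-1})| \leq \lVert f \rVert_{2^*} \lVert h \rVert_{2^*}^{2^*-1} = \lVert f\rVert_{2^*}$, so the supremum is finite; the real work is to show it is not attained only in the limit. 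If $\lambda_n \to 0$ or $\lambda_n \to \infty$ or $|x_n| \to \infty$, then $h_n^{2^*-1} \rightharpoonup 0$ weakly in $(L^{2^*})' = L^{(2^*)'}$ (the mass of $B_{x_n,\lambda_n}^{2^*-1}$ either spreads out, concentrates, or escapes to infinity), and since $f \in L^{2^*}$ is a fixed function this forces $(f, h_n^{2^*-1}) \to 0$. Hence if $\mathsf m(f) > 0$ the parameters stay in a compact subset of $(0,\infty) \times \R^d$, and by continuity of $(x,\lambda) \mapsto (f, B_{x,\lambda}^{2^*-1})$ the supremum is attained at an interior point. If $\mathsf m(f) = 0$, then $(f, h^{2^*-1}) = 0$ for every $h$, so the supremum is trivially attained (by any $h$, with value $0$), and correspondingly $\dist(f,\mathcal M) = \lVert \ds f\rVert_2$ is attained by $h = 0 \in \mathcal M$ (or rather by the closure point $0$).

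I expect the main obstacle to be the attainment argument, specifically making rigorous the claim that $h_n^{2^*-1} \rightharpoonup 0$ in all three escape scenarios and dealing cleanly with the degenerate case $\mathsf m(f)=0$ (where $0 \notin \mathcal M$ strictly, so one should phrase things via $\overline{\mathcal M} = \mathcal M \cup \{0\}$ or simply observe the infimum defining $\dist$ is approached by bubbles of vanishing amplitude). The density argument needed to justify $(\ds f, \ds h) = S_d(f, h^{2^*-1})$ for general $f \in \dot H^s$ — rather than just $f \in C_0^\infty$ — is routine: both sides are continuous in $f$ with respect to the $\dot H^s$ norm, using $\lVert h^{2^*-1}\rVert_{(2^*)'} = 1$ and the Sobolev embedding $\dot H^s \hookrightarrow L^{2^*}$ for the right-hand side.
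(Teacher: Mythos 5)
Your derivation of the identity \eqref{dist identity lemma} is correct and follows the paper's proof exactly: expand the square, use $\Ds h = S_d h^{2^*-1}$, minimize over the scalar $c$, then over $h \in \mathcal M_1$. The characterization of optimizers is also handled the same way.

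There is, however, a genuine gap in your attainment argument. Your strategy — take a maximizing sequence $h_n = B_{x_n,\lambda_n}$, observe that if $\lambda_n \to 0$, $\lambda_n \to \infty$, or $|x_n|\to\infty$ then $(f,h_n^{2^*-1})\to 0$, conclude compactness of the parameters — works only once you know $\mathsf m(f) > 0$. You correctly identify this as the remaining issue, but then you try to dispose of the complementary case $\mathsf m(f) = 0$ by declaring the supremum ``trivially attained'' and $\dist(f,\mathcal M)$ achieved by $h = 0$. This does not work: $0 \notin \mathcal M$ (the prefactor $c$ must be nonzero), so if $\mathsf m(f) = 0$ the infimum defining $\dist(f,\mathcal M)^2 = \|\ds f\|_2^2$ would be approached by bubbles of vanishing amplitude but never attained, and the lemma's claim would simply be false for that $f$. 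The missing ingredient is a proof that $\mathsf m(f) > 0$ whenever $f \neq 0$, which rules out the degenerate case entirely. The paper supplies this with a concrete pointwise estimate: for continuous $f$, $(f, B_{x,\lambda}^{2^*-1}) = c\, f(x)\, \lambda^{-\frac{d-2s}{2}} + o(\lambda^{-\frac{d-2s}{2}})$ as $\lambda \to \infty$, which is positive as soon as $f(x) \neq 0$ for some $x$; for general $f \in L^{2^*}$ one approximates by continuous functions converging a.e.\ and passes to the limit. Your sketch gestures at the right machinery (weak convergence $h_n^{2^*-1}\rightharpoonup 0$ in all escape regimes) but without the positivity of $\mathsf m(f)$ for nonzero $f$, the compactness argument never gets off the ground.
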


For $s=1$, identity \eqref{dist identity lemma} is precisely the one given in \cite[Lemma 3]{DoEsFiFrLo}. Its simple proof readily extends to all $s \in (0, \frac{d}{2})$.

\begin{proof}
Recall that $\Ds h = S_d h^{2^* - 1}$. For any $c \in \R$ and $h \in \mathcal M_1$, we can thus write 
\begin{align*}
 \lVert  \ds (f - ch)\rVert  _2^2 &= \lVert \ds f\rVert  _2^2 - 2 c S_d (f, h^{2^*-1}) + c^2 S_d  \\
 &= \lVert \ds f\rVert  _2^2 - S_d (f, h^{2^*-1})^2 + S_d \left(c - (f, h^{2^* - 1})\right)^2
\end{align*}
by completing a square. Hence 
\begin{align*}
\dist(f, \mathcal M)^2 = \inf_{h \in \mathcal M_1} \inf_{c \in \R}  \lVert  \ds (f - ch)\rVert  _2^2 = \lVert \ds f\rVert  _2^2  - S_d \sup_{h \in \mathcal M_1} (f, h^{2^*-1})^2
\end{align*}
as claimed. The relation between the optimizers of $\dist(u, \mathcal M)$ and $\mathsf m(f)$ is now immediate from the fact that $\inf_{c \in \R} (c - (f, h^{2^* - 1}))^2$ is attained uniquely at $c = (f, h^{2^* - 1})$. 

By this relation between optimizers, it only remains to prove that $\mathsf m(f)$ is always achieved. Let $(B_{x_n, \lambda_n})_n$ be a minimizing sequence for $\mathsf m(f)$. This sequences converges to some $B_{x,\lambda}$, which plainly is a minimizer, unless $\lambda_n \to 0$, $\lambda_n \to \infty$ or $|x_n| \to \infty$. In all three cases it is easy to see that $(f, B_{x_n, \lambda_n}^{2^*-1}) \to 0$ as $n \to \infty$. So to exclude this case, and therefore establish existence of a minimizer, it is sufficient to show that $\mathsf m(f) > 0$.

We will show more, namely that in fact $\mathsf m(f) > 0$ for any non-zero $f \in L^{2^*}(\R^d) \supset \dot{H}^s(\R^d)$ (for which $\mathsf m(f)$ is still well-defined). Indeed, if $f$ is continuous, then for every $x \in \R^d$ one has $(f,B_{x, \lambda}^{2^*-1}) = c f(x) \lambda^{-\frac{d-2s}{2}} + o(\lambda^{-\frac{d-2s}{2}})$ as $\lambda \to \infty$, where $c >0$ is some dimensional constant. Thus $\mathsf m(f) > 0$ unless $f \equiv 0$ (in which case $\mathsf m(0) = 0$ is trivially achieved). For the general case $f \in L^{2^*}(\R^d)$, consider a sequence of continuous functions such that $f_k \to f$ in $L^{2^*}(\R^d)$ and $f_k \to f$ a.e.. If $f \not \equiv 0$ a.e., we can thus find $x \in \R^d$ such that $f_k(x) \to f(x) \neq 0$. Then $(f, B_{x,\lambda}^{2^*-1}) = (c + o(1)) f(x) \lambda^\frac{d-2s}{2}$ follows by the properties of $f_k$. This completes the proof. 
\end{proof}

The next two elementary lemmas will be needed naturally in the framework of the Brezis--Lieb-type argument we use to prove Theorem \ref{theorem minimizer}, as explained above. 

\begin{lemma}
\label{lemma convexity}
Let $p > 2$. Then the function $g(t) = (1 + t^\frac{p}{2})^\frac{2}{p}$ is strictly convex on $(0, \infty)$. In particular, 
\[\eta \mapsto \frac{(1 + \eta^{p})^\frac{2}{p} - 1}{\eta^2} \]
is strictly increasing in $\eta \in (0, \infty)$. 
\end{lemma}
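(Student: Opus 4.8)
The plan is to prove both claims in Lemma~\ref{lemma convexity} by elementary calculus, treating the convexity of $g$ first and then deducing the monotonicity statement as a direct corollary.

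\textbf{Step 1: Convexity of $g$.}
I would set $q = p/2 > 1$ and write $g(t) = (1 + t^q)^{1/q}$. One computes
\[
 g'(t) = (1 + t^q)^{\frac{1}{q} - 1} t^{q-1}, \qquad
 g''(t) = (1-q)\, t^{q-2} (1+t^q)^{\frac{1}{q}-2}\bigl[ t^q - (1+t^q) \bigr] \cdot (\text{something}),
\]
so the cleanest route is to simplify directly: after differentiating twice and collecting terms one finds
\[
 g''(t) = (q-1)\, t^{q-2} \, (1 + t^q)^{\frac{1}{q} - 2},
\]
which is strictly positive for every $t \in (0,\infty)$ since $q > 1$. (I would double-check the algebra by factoring $g'(t) = t^{q-1}(1+t^q)^{\frac1q - 1}$ and differentiating the product, using that $\frac{d}{dt}(1+t^q)^{\frac1q-1} = (1-q)t^{q-1}(1+t^q)^{\frac1q-2}$; the two resulting terms combine to leave exactly the displayed expression.) Hence $g$ is strictly convex on $(0,\infty)$.

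\textbf{Step 2: From convexity to monotonicity.}
With $\eta > 0$ set $t = \eta^2$, so $t^{p/2} = \eta^p$ and
\[
 \frac{(1+\eta^p)^{2/p} - 1}{\eta^2} = \frac{g(t) - g(0)}{t - 0}.
\]
This is the slope of the chord of $g$ joining $0$ to $t$. For a strictly convex function the chord slope $\tfrac{g(t) - g(0)}{t}$ is strictly increasing in $t$ (a standard fact: if $0 < t_1 < t_2$, then $t_1$ is a strict convex combination of $0$ and $t_2$, so $g(t_1) < \frac{t_2 - t_1}{t_2} g(0) + \frac{t_1}{t_2} g(t_2)$, which rearranges to $\frac{g(t_1)-g(0)}{t_1} < \frac{g(t_2)-g(0)}{t_2}$). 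Since $t = \eta^2$ is strictly increasing in $\eta \in (0,\infty)$, the composite map $\eta \mapsto \frac{(1+\eta^p)^{2/p}-1}{\eta^2}$ is strictly increasing, as claimed.

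\textbf{Main obstacle.}
There is no serious obstacle here; this is a routine exercise. The only place to be careful is the second-derivative computation in Step~1 — it is easy to make a sign or exponent slip — so I would verify it via the product-rule factorization above, or alternatively bypass $g''$ entirely by proving the chord-slope monotonicity directly from the strict subadditivity-type inequality $(1+t^q)^{1/q} < 1 + t$ combined with scaling, though the $g''$ route is the shortest to write cleanly. One should also note that strict convexity on the open interval is all that is needed, so no delicate behavior at the endpoint $t = 0$ (where $g$ is merely continuous from the right, with infinite-slope issues absent since $q>1$ makes $g'(0^+) = 0$) needs to be addressed.
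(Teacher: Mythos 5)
Your proof is correct and follows essentially the same route as the paper: compute $g''(t) = (q-1)t^{q-2}(1+t^q)^{\frac{1}{q}-2} > 0$ with $q = p/2 > 1$, then read off the monotonicity of the chord slope $\frac{g(\eta^2)-g(0)}{\eta^2}$ from strict convexity. The only cosmetic difference is that you spell out the chord-slope argument and include a garbled intermediate display for $g''$ before recovering the correct formula via the product rule; the paper simply states the final expression.
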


\begin{proof}
We write $q= \frac{p}{2} > 1$. A computation shows that 
\[ g''(t) = (q-1) t^{q-2} (1+t^q)^{\frac{1}{q}-2}. \]
Hence $g''(t) > 0$ for all $t >0$, i.e. $g$ is strictly convex on $(0, \infty)$. 

Now the function $\frac{(1 + \eta^{p})^\frac{2}{p} - 1}{\eta^2} = \frac{g(\eta^2) - g(0)}{\eta^2}$ is strictly increasing in $\eta$ by strict convexity of $g$. 
\end{proof}

The next lemma describes how the value of a quotient changes when summands of the numerator and denominator change in a certain fashion.

\begin{lemma}
\label{lemma quotient inequalities}
Let $A,B,C,D,E,F > 0$ be such that 
\[ \frac{A}{B} \geq \frac{C}{D} \geq \frac{E}{F}, \qquad \text{ and }  \quad D \leq F. \]
Then 
\[ \frac{A}{B} \geq \frac{A + C}{B + D} \geq  \frac{A + E}{B + F}. \]
Moreover, one has $\frac{A}{B} > \frac{A + C}{B + D}$ strictly if $\frac{A}{B} > \frac{C}{D}$ strictly. Likewise, $\frac{A + C}{B + D} > \frac{A + E}{B + F}$ if either $\frac{C}{D} > \frac{E}{F}$ or $D < F$. 
\end{lemma}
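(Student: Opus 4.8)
This lemma is purely arithmetic, and the plan is simply to clear the (strictly positive) denominators in each of the two claimed inequalities and check that what remains is one of the hypotheses. For the first inequality, since $B$ and $B+D$ are positive, $\frac{A}{B} \geq \frac{A+C}{B+D}$ is equivalent to $A(B+D) \geq B(A+C)$, i.e. to $AD \geq BC$, i.e. to $\frac{A}{B} \geq \frac{C}{D}$; the very same cross-multiplication makes it clear that the inequality is strict exactly when $\frac{A}{B} > \frac{C}{D}$. (This is nothing but the mediant inequality.)

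For the second inequality, clearing the positive denominators reduces $\frac{A+C}{B+D} \geq \frac{A+E}{B+F}$ to $(A+C)(B+F) \geq (A+E)(B+D)$; expanding and cancelling the common $AB$, the difference of the two sides equals
\[ A(F-D) + B(C-E) + (CF - ED) . \]
The first summand is nonnegative by the hypothesis $D \leq F$, and the last is nonnegative because $CF \geq ED$ is just a restatement of $\frac{C}{D} \geq \frac{E}{F}$. The middle summand $B(C-E)$ is the only one that may be negative, and controlling it is the single point in the proof requiring any care: from $\frac{C}{D} \geq \frac{E}{F}$ one has $C \geq \frac{ED}{F}$, hence $B(C-E) \geq -\frac{BE}{F}(F-D)$, and combining this with the first summand and using $AF \geq BE$ (which follows from $\frac{A}{B} \geq \frac{C}{D} \geq \frac{E}{F}$) gives $A(F-D)+B(C-E)+(CF-ED) \geq \frac{(F-D)(AF-BE)}{F} + (CF-ED) \geq 0$. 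An equivalent way to package exactly this computation, when $D < F$, is to apply the mediant inequality to the fractions $\frac{A+C}{B+D}$ and $\frac{E-C}{F-D}$: the comparison $\frac{A+C}{B+D} \geq \frac{E-C}{F-D}$ is implied by $\frac{A}{B} \geq \frac{E}{F}$ together with $\frac{C}{D} \geq \frac{E}{F}$, and the mediant of these two fractions is precisely $\frac{A+E}{B+F}$; the degenerate case $D=F$ is trivial since then the inequality just says $C \geq E$.

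Finally, the strictness assertions are read off from the equality cases in the displayed estimate above: the second inequality is strict as soon as $CF > ED$, i.e. $\frac{C}{D} > \frac{E}{F}$, and, when $CF = ED$, it is strict as soon as $F > D$ and $AF > BE$ (the latter being automatic under the chain $\frac{A}{B} \geq \frac{C}{D} \geq \frac{E}{F}$ unless $\frac{A}{B} = \frac{C}{D} = \frac{E}{F}$). I expect no genuine obstacle; the only mildly delicate ingredient is the sign of $B(C-E)$, handled by the elementary bound above, together with the bookkeeping of these equality cases.
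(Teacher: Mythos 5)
Your proof is correct, and it proceeds by a genuinely different (though equally elementary) route than the paper's. The paper proves the second inequality by rewriting $\frac{A+E}{B+F} = \frac{A + \frac{F}{D}\cdot\frac{DE}{F}}{B + \frac{F}{D}\cdot D}$, bounding $\frac{DE}{F}\leq C$, and then invoking the monotonicity of $t\mapsto \frac{A+tC}{B+tD}$ on $(0,\infty)$ (which is decreasing precisely when $\frac{A}{B}\geq\frac{C}{D}$) together with $\frac{F}{D}\geq 1$; you instead cross-multiply and expand, isolating $A(F-D)+B(C-E)+(CF-ED)$ and controlling the single possibly-negative term $B(C-E)$ by the bound $C\geq \frac{ED}{F}$ combined with $AF\geq BE$. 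Both are correct and short; the paper's phrasing is slicker, while yours makes the equality cases transparent. In fact, your bookkeeping surfaces a small imprecision in the lemma as stated: when $D<F$ but $\frac{A}{B}=\frac{C}{D}=\frac{E}{F}$, the displayed difference vanishes identically (take $A=kB$, $C=kD$, $E=kF$), so the claimed strict inequality $\frac{A+C}{B+D}>\frac{A+E}{B+F}$ actually fails. You are right to flag this. The paper's "simple inspection" proof of strictness glosses over this degenerate case; it causes no harm downstream because the only place the strictness is used (in the proof of Lemma~\ref{lemma m(f)=m(g)}) already has $\frac{C}{D}>\frac{E}{F}$ strictly, but the lemma statement's disjunction "or $D<F$" should really carry the additional proviso that not all three ratios coincide.
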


\begin{proof}
Using that $\frac{DE}{F} \leq C$ by assumption, we write
\[ \frac{A+ E}{B+ F}  = \frac{A + \frac{F}{D} \frac{DE}{F}}{B + \frac{F}{D}D} \leq \frac{A + \frac{F}{D} C}{B + \frac{F}{D} D}. \]
The inequality $\frac{A}{B} \geq \frac{C}{D}$ entails that the function $t \mapsto \frac{A + tC}{B + tD}$ is decreasing on $(0, \infty)$. Since $\frac{F}{D}  \geq 1$, this yields the conclusion (with non-strict inequalities). A simple inspection of the above proof also yields all the claims about the strict inequalities. 
\end{proof}

\section{Proof of Theorem \ref{theorem strict ineq 2 bubbles}}
\label{section strict inequality proof}

In this section we prove Theorem \ref{theorem strict ineq 2 bubbles}. We will do so by considering a sequence of test functions of the form 
\begin{equation}
\label{u lambda definition}
u_\lambda(x) := B(x) + B_\lambda(x), 
\end{equation}
as $\lambda \to 0$. Recall that the normalized Talenti bubble $B$ is given by \eqref{B definition normalized}, and that $B_\lambda(x) = B_{0,\lambda}(x) = \lambda^\frac{d-2s}{2} B(\lambda x)$. 

The following proposition contains the needed expansion of the terms appearing in $\mathcal E(u_\lambda)$. 

\begin{proposition}
\label{proposition expansion u lambda}
Let $c_0 := B(0) \int_{\R^d} B^\frac{d+2s}{d-2s} \diff x$. As $\lambda \to 0$, the following holds. 
\begin{enumerate}[(i)]
\item \label{expansion gradient} $\int_{\R^d} |\ds u_\lambda|^2 \diff x = 2 S_d + 2 S_d c_0 \lambda^\frac{d-2s}{2} + o(\lambda^\frac{d-2s}{2})$. 
\item \label{expansion p norm} $\lVert u_\lambda\rVert  _{2^*}^2 = 2^\frac{2}{2^*} + 2^{\frac{2}{2^*} +1} c_0 \lambda^\frac{d-2s}{2} + o (\lambda^\frac{d-2s}{2})$. 
\item \label{expansion dist} $\dist(u_\lambda, \mathcal M)^2 = S_d + o(\lambda^\frac{d-2s}{2})$.
\end{enumerate}
\end{proposition}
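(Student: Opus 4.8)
The plan is to expand each of the three quantities in $\mathcal E(u_\lambda)$ separately, in powers of the small parameter $\lambda^{\frac{d-2s}{2}}$, which is the size of the "cross term" between the two bubbles $B$ and $B_\lambda$. The basic heuristic is that $B$ and $B_\lambda$ live on very different length scales ($1$ and $\lambda^{-1}$) as $\lambda \to 0$, so they are asymptotically non-interacting; the leading correction to the non-interacting picture is governed by the overlap integral $\int B^{2^*-1} B_\lambda$, which I claim equals $c_0 \lambda^{\frac{d-2s}{2}} + o(\lambda^{\frac{d-2s}{2}})$. Indeed, $B_\lambda(x) = \lambda^{\frac{d-2s}{2}} B(\lambda x)$, and since $B(\lambda x) \to B(0)$ pointwise while $B^{2^*-1} = B^{\frac{d+2s}{d-2s}}$ is integrable (it decays like $|x|^{-(d+2s)}$, which beats $|x|^{-d}$ since $s>0$), dominated convergence gives $\lambda^{-\frac{d-2s}{2}}\int B^{2^*-1} B_\lambda \to B(0)\int B^{\frac{d+2s}{d-2s}} = c_0$. (One should double-check the symmetric overlap $\int B_\lambda^{2^*-1} B$ gives the same constant $c_0$; by the change of variables $x \mapsto x/\lambda$ and the same argument, or by self-duality of the pairing $\Ds B = S_d B^{2^*-1}$, it does.)

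For \eqref{expansion gradient}: write $\int |\ds u_\lambda|^2 = \|\ds B\|_2^2 + \|\ds B_\lambda\|_2^2 + 2(\ds B, \ds B_\lambda) = 2S_d + 2(B, \Ds B_\lambda)$. Using $\Ds B_\lambda = S_d B_\lambda^{2^*-1}$, the cross term is $2S_d (B, B_\lambda^{2^*-1}) = 2S_d c_0 \lambda^{\frac{d-2s}{2}} + o(\lambda^{\frac{d-2s}{2}})$ by the overlap computation above, giving exactly the claimed expansion with no further work.

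For \eqref{expansion p norm}: this is the term requiring the most care, since $\|u_\lambda\|_{2^*}^2 = \left(\int (B+B_\lambda)^{2^*}\right)^{2/2^*}$ and I need to expand $\int (B+B_\lambda)^{2^*}$ to first order. I would split $\R^d$ into a region where $B$ dominates and one where $B_\lambda$ dominates (say $|x| \le \lambda^{-1/2}$ versus $|x| \ge \lambda^{-1/2}$, or more robustly use the pointwise inequality $|(a+b)^{2^*} - a^{2^*} - b^{2^*}| \le C(a^{2^*-1}b + a b^{2^*-1})$ valid for $a,b\ge 0$ since $2^*>2$, together with dominated/monotone convergence). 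This yields $\int(B+B_\lambda)^{2^*} = \int B^{2^*} + \int B_\lambda^{2^*} + 2^* (B, B_\lambda^{2^*-1}) + 2^*(B_\lambda, B^{2^*-1}) + o(\lambda^{\frac{d-2s}{2}}) = 2 + 2\cdot 2^* c_0 \lambda^{\frac{d-2s}{2}} + o(\lambda^{\frac{d-2s}{2}})$, using $\|B\|_{2^*}=\|B_\lambda\|_{2^*}=1$ and that both mixed overlaps equal $c_0\lambda^{\frac{d-2s}{2}}$ to leading order. Then $\|u_\lambda\|_{2^*}^2 = \left(2 + 2\cdot 2^* c_0 \lambda^{\frac{d-2s}{2}} + o(\cdot)\right)^{2/2^*} = 2^{2/2^*}\left(1 + 2^* c_0 \lambda^{\frac{d-2s}{2}} + o(\cdot)\right)^{2/2^*} = 2^{2/2^*} + 2^{2/2^*} \cdot \frac{2}{2^*} \cdot 2^* c_0 \lambda^{\frac{d-2s}{2}} + o(\cdot) = 2^{2/2^*} + 2^{2/2^*+1} c_0 \lambda^{\frac{d-2s}{2}} + o(\lambda^{\frac{d-2s}{2}})$ by a Taylor expansion of $t\mapsto t^{2/2^*}$, matching the claim.

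For \eqref{expansion dist}: by Lemma \ref{lemma distance}, $\dist(u_\lambda,\mathcal M)^2 = \|\ds u_\lambda\|_2^2 - S_d\, \mathsf m(u_\lambda)$, and using part \eqref{expansion gradient} it suffices to show $\mathsf m(u_\lambda) = 2 + 2c_0\lambda^{\frac{d-2s}{2}} + o(\lambda^{\frac{d-2s}{2}})$, i.e. that the $\lambda^{\frac{d-2s}{2}}$ term in $S_d\mathsf m(u_\lambda)$ exactly cancels the one in $\|\ds u_\lambda\|_2^2$ up to $o(\lambda^{\frac{d-2s}{2}})$, leaving $S_d + o(\lambda^{\frac{d-2s}{2}})$. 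The lower bound $\mathsf m(u_\lambda) \ge (u_\lambda, B^{2^*-1})^2 = (1 + c_0\lambda^{\frac{d-2s}{2}} + o(\cdot))^2 = 1 + 2c_0\lambda^{\frac{d-2s}{2}} + o(\cdot)$ comes from testing with $h = B$ (and $\ge 1 + 2c_0\lambda^{\frac{d-2s}{2}}+o(\cdot)$ likewise by testing with $h=B_\lambda$; combining, $\mathsf m(u_\lambda)\ge$ at least the larger, but I actually want the value near $2$, so I should instead test against a bubble $B_{x,\mu}$ with $\mu$ of intermediate scale, or near $B$, tracking the contribution of the far bubble). The delicate point is the matching upper bound: I would argue that the optimal $h_\lambda \in \mathcal M_1$ for $\mathsf m(u_\lambda)$ must (after extracting a subsequence and rescaling) concentrate at the scale of one of the two bubbles, so that $(u_\lambda, h_\lambda^{2^*-1})$ picks up the full mass $1$ from the bubble it matches plus an $O(\lambda^{\frac{d-2s}{2}})$ cross contribution from the other, and that the second variation / coercivity of $\mathsf m$ near $\mathcal M_1$ (the spectral gap underlying the Bianchi--Egnell inequality itself, or just a direct estimate exploiting that $u_\lambda$ is $O(\lambda^{\frac{d-2s}{2}})$-close in $\dot H^s$ to the two-bubble configuration whose distance to $\mathcal M$ is $S_d^{1/2}+o(1)$) forces $\mathsf m(u_\lambda) \le 2 + 2c_0\lambda^{\frac{d-2s}{2}} + o(\lambda^{\frac{d-2s}{2}})$.

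\textbf{Main obstacle.} The genuinely delicate part is \eqref{expansion dist}: everything else reduces to the single overlap asymptotic $\int B^{2^*-1}B_\lambda \sim c_0\lambda^{\frac{d-2s}{2}}$ plus routine Taylor expansion, but controlling $\mathsf m(u_\lambda)$ to the stated precision requires understanding \emph{where} the optimizing bubble $h_\lambda$ sits and ruling out that it could split its attention between the two bumps in a way that changes the first-order coefficient — equivalently, showing that the error from replacing $\dist(u_\lambda,\mathcal M)^2$ by $\dist(B + B_\lambda, \{\text{nearby bubble}\})^2$ is $o(\lambda^{\frac{d-2s}{2}})$ and not merely $O(\lambda^{\frac{d-2s}{2}})$. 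I expect this is handled by a compactness argument for the (rescaled) maximizing sequence for $\mathsf m(u_\lambda)$ combined with the non-degeneracy of the bubbles, so that only the $h \approx B$ and $h\approx B_\lambda$ branches survive and on each the cross term is computed explicitly as above.
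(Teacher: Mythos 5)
Your parts (\ref{expansion gradient}) and (\ref{expansion p norm}) are essentially the paper's argument: (i) is identical, and for (ii) the paper also splits $\R^d$ at $|x|=\lambda^{-1/2}$ (using the inversion symmetry $I_{\lambda^{-1/2}}$ from Lemma \ref{lemma conformal trafos} to write the integral as twice the integral over $\{|x|\le \lambda^{-1/2}\}$, where $0\le B_\lambda\le B$ and a uniform Taylor expansion of $(1+a)^{2^*}$ applies). Be aware, though, that the pointwise bound $|(a+b)^{2^*}-a^{2^*}-b^{2^*}|\lesssim a^{2^*-1}b+ab^{2^*-1}$ you float as an alternative only gives $\int(B+B_\lambda)^{2^*}=2+O(\lambda^{(d-2s)/2})$; to extract the \emph{coefficient} of $\lambda^{(d-2s)/2}$ you still need the region-splitting.

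There is a genuine error in part (\ref{expansion dist}). Starting from $\dist(u_\lambda,\mathcal M)^2=\lVert\ds u_\lambda\rVert_2^2-S_d\,\mathsf m(u_\lambda)$ and your own expansion (i), the target is
\[
\mathsf m(u_\lambda)=1+2c_0\lambda^{\frac{d-2s}{2}}+o(\lambda^{\frac{d-2s}{2}}),
\]
not $2+2c_0\lambda^{(d-2s)/2}+o(\cdot)$ as you write: with your target one would get $\dist(u_\lambda,\mathcal M)^2=o(\lambda^{(d-2s)/2})$, which contradicts the claim $\dist^2=S_d+o(\cdot)$. (Note also that $\mathsf m(u_\lambda)\le\lVert u_\lambda\rVert_{2^*}^2=2^{2/2^*}<2$ by Hölder, so a value near $2$ is impossible.) In fact your own lower bound $(u_\lambda,B^{2^*-1})^2=1+2c_0\lambda^{(d-2s)/2}+o(\cdot)$ is exactly the sharp value; you just misread it as "only $\approx1$, want $\approx2$" and then head in the wrong direction (intermediate-scale bubbles would give \emph{smaller} pairings, and the coercivity heuristic is pointed at the wrong quantity). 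What is actually missing is the matching \emph{upper} bound for $\mathsf m(u_\lambda)$, and the paper's route to it is elementary but concrete: since $u_\lambda$ is radial and decreasing, the supremum over $\mathcal M_1$ reduces to the one-variable problem $\sup_{\mu>0}H_\lambda(\mu)$ with $H_\lambda(\mu)=(B,B_\mu^{2^*-1})+(B_\lambda,B_\mu^{2^*-1})=:F(\mu)+G_\lambda(\mu)$; the inversion symmetry restricts to $\mu\ge\lambda^{1/2}$, decay estimates force the maximizer $\mu(\lambda)$ into a compact set and then $\mu(\lambda)\to1$; finally, using $F'(1)=0$, $F''(1)\neq0$ and $G_\lambda'(1)=O(\lambda^{(d-2s)/2})$ (Lemma \ref{lemma F and G}), a Taylor expansion at $\mu=1$ gives $|\mu(\lambda)-1|\lesssim\lambda^{(d-2s)/2}$ and hence $H_\lambda(\mu(\lambda))=1+c_0\lambda^{(d-2s)/2}+o(\cdot)$, so $\mathsf m(u_\lambda)=H_\lambda(\mu(\lambda))^2=1+2c_0\lambda^{(d-2s)/2}+o(\cdot)$. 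This is the localization-of-the-optimizing-bubble step your sketch gestures at but does not supply; without it, and with the arithmetic error in the target, your proof of (iii) does not go through.
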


Using these expansions, the proof of our first main result is immediate. 

\begin{proof}
[Proof of Theorem \ref{theorem strict ineq 2 bubbles}]
By Proposition \ref{proposition expansion u lambda}, as $\lambda \to 0$, we have
\begin{align*}
\mathcal E(u_\lambda) &= \frac{(2 - 2^\frac{2}{2^*}) S_d - S_d (2^{\frac{2}{2^*} + 1} - 2) c_0 \lambda^\frac{d-2s}{2}}{S_d} + o(\lambda^\frac{d-2s}{2}) \\
&= 2 - 2^\frac{2}{2*} - (2^{\frac{2}{2^*} + 1} - 2) c_0 \lambda^\frac{d-2s}{2} + o(\lambda^\frac{d-2s}{2}) < 2 - 2^\frac{2}{2*}
\end{align*}
for $\lambda > 0$ small enough, which is what we wanted to prove.
\end{proof}

It remains to give the proof of Proposition \ref{proposition expansion u lambda}. 

\begin{proof}
[Proof of Proposition \ref{proposition expansion u lambda}]
Let us first prove \eqref{expansion gradient}. Clearly, 
\[ \lVert \ds u_\lambda\rVert  _2^2  = \lVert \ds B\rVert  _2^2 + \lVert \ds B_\lambda\rVert  _2^2 + 2 \langle B, B_\lambda \rangle_{\dot{H}^s(\R^d)} = 2 S_d + 2 \langle B, B_\lambda \rangle_{\dot{H}^s(\R^d)}. \]
Now integrating by parts and using the equation $\Ds B = S_d B^\frac{d+2s}{d-2s}$, we find 
\begin{align*}
\langle B, B_\lambda \rangle_{\dot{H}^s(\R^d)} = S_d \int_{\R^d} B^\frac{d+2s}{d-2s} B_\lambda \diff x = S_d c_0 \lambda^\frac{d-2s}{2} + o(\lambda^\frac{d-2s}{2}).
\end{align*}
Next, let us prove \eqref{expansion p norm}.  Using that inversion about $\partial B(0, \lambda^{-1/2})$ transforms $B + B_\lambda$ into itself by Lemma \ref{lemma conformal trafos}, we can write 
\[ \int_{\R^d} (B + B_\lambda)^{2^*} \diff x = 2 \int_{B(0, \lambda^{-1/2})}   (B + B_\lambda)^{2^*} \diff x. \]
On $B(0, \lambda^{-1/2})$, we have $0 \leq B_\lambda \leq B$. Since $(1 + a)^{2^*} = 1+ a 2^* + \mathcal O(a^2)$ uniformly for $a \in [0,1]$, we have  
\begin{align*}
(B + B_\lambda)^{2^*} &= B^{2^*} \left( 1+ \frac{B_\lambda}{B}\right)^{2^*} = B^{2^*} \left( 1+ 2^* \frac{B_\lambda}{B} + \mathcal O\left(\frac{B_\lambda^2}{B^2}\right)\right) \\
& = B^{2^*} + 2^* B^{2^* - 1} B_\lambda + \mathcal O( B_\lambda^2 B^{2^* - 2}) 
\end{align*} 
uniformly on $B(0, \lambda^{-1/2})$. Now straightforward calculations show 
\[ \int_{B(0, \lambda^{-1/2})} B^{2^*} \diff x = 1 + \mathcal O(\lambda^\frac{d}{2}) = 1 + o (\lambda^\frac{d-2s}{2}), \]
\[ \int_{B(0, \lambda^{-1/2})} B_\lambda B^{2^* - 1} \diff x = \lambda^\frac{d-2s}{2} c_0 + o(\lambda^\frac{d-2s}{2}) \]
by dominated convergence, and 
\[ \int_{B(0, \lambda^{-1/2})} B_\lambda^2 B^{2^* - 2} \diff x = \mathcal O(\lambda^\frac{d}{2}) = o(\lambda^\frac{d-2s}{2}). \]
Thus, in summary 
\[ \int_{\R^d} (B + B_\lambda)^{2^*} \diff x = 2 + 2 \times 2^* c_0 \lambda^\frac{d-2s}{2} + o(\lambda^\frac{d-2s}{2}).  \]
Now \eqref{expansion p norm} follows from a first-order Taylor expansion of $t \mapsto t^\frac{2}{2^*}$ at $t = 2$. 

We now turn to the proof of \eqref{expansion dist}, which is the most involved. To start with, by Lemma \ref{lemma distance} we can write 
\begin{equation}
\label{dist proof}
\dist (u_\lambda, \mathcal M)^2 = \lVert \ds u_\lambda\rVert  _2^2 - S_d \sup_{h \in \mathcal M_1} (u_\lambda, h^{2^* - 1})^2.
\end{equation} 
Since $u_\lambda$ is positive and radially symmetric-decreasing, $ \sup_{h \in \mathcal M_1} (u_\lambda, h^{2^* - 1})^2$ can be found by optimizing over positive symmetric-decreasing functions in $\mathcal M_1$ only, i.e. 
\[ \sup_{h \in \mathcal M_1} (u_\lambda, h^{2^* - 1})^2 = \sup_{\mu > 0} (u_\lambda, B_\mu^{2^* - 1})^2, \]
where $B_\mu(x) = \mu^\frac{d-2s}{2} B(\mu x)$. In other words we only need to find the maximum of the function of one variable $\mu \in (0, \infty)$ given by
\[ H_\lambda(\mu) := (u_\lambda, B_\mu^{2^* - 1}) = F(\mu) + G_\lambda(\mu) \]
with 
\begin{equation}
\label{F, G definition}
 F(\mu) := (B, B_\mu^{2^* - 1}) \qquad \text { and } \qquad G_\lambda(\mu) := (B_\lambda, B_\mu^{2^*-1}).
\end{equation}
Lemma \ref{lemma conformal trafos}.(ii) with $\tau = \lambda^{-1/2}$ implies $H_\lambda(\mu) = H_\lambda(\mu^{-1} \lambda)$. So we only need to optimize over $\mu \geq \lambda^{1/2}$. 

Using the estimates 
\begin{equation}
\label{F, G estimates}
F(\mu)  \lesssim \min\{ \mu^\frac{d-2s}{2}, \mu^{-\frac{d-2s}{2}} \}, \qquad G_\lambda(\mu) \lesssim \left( \frac{\lambda}{\mu}\right)^\frac{d-2s}{2},  \qquad \text{ uniformly for all } \mu \geq \lambda^\frac{1}{2}, 
\end{equation}
we clearly have $\lim_{\mu \to \infty} H_\lambda(\mu) = 0$. Hence $\sup_{\mu \in [\lambda^{1/2}, \infty)} H_\lambda(\mu)$ is attained at some $\mu(\lambda)$. Moreover, since $H_\lambda(1) > F(1) = 1$, in view of \eqref{F, G estimates} we must have $\mu(\lambda) \in [C^{-1}, C]$ for some $C$. Thus there is $\mu_0 > 0$ such that $\mu(\lambda) \to \mu_0$ as $\lambda \to 0$. 

Again by \eqref{F, G estimates}, we see that 
\[ 1 < H_\lambda(1) \leq H_\lambda(\mu(\lambda)) = F(\mu_0) + o(1). \]
Passing to the limit $\lambda \to 0$ yields $1 = F(\mu_0) = (B, B_{\mu_0}^{2^*-1})$ and thus $\mu_0 = 1$ by the equality condition in Hölder's inequality. 

We have thus proved that $\mu(\lambda) \to 1$. By a Taylor expansion at 1, and since $F'(1)  =0$ by Lemma \ref{lemma F and G},  $\mu(\lambda)$ satisfies
\[ 0 = H'_\lambda(\mu(\lambda)) = F'_\lambda(\mu(\lambda)) + G'_\lambda(\mu(\lambda)) = (F''_\lambda(1)+o(1)) (\mu_\lambda-1) + (1+ o(1)) G'_\lambda(1).  \]
Still by Lemma \ref{lemma F and G}, we have  $F''(1) \neq 0$ and $G'_\lambda(1) \lesssim \lambda^\frac{d-2s}{2}$. Therefore
\begin{equation}
\label{mu - 1 estimate}
|\mu_\lambda - 1| = (1 + o(1)) \frac{|G'_\lambda(1)|}{|F''_\lambda(1) + o(1)|} \lesssim \lambda^\frac{d-2s}{2}.
\end{equation} 
Now we can conclude by inserting the estimate \eqref{mu - 1 estimate} back into $H_\lambda(\mu)$. We obtain, using again Lemma \ref{lemma F and G},
\begin{align*}
\mathsf m(u_\lambda){1/2} &= H_\lambda(\mu(\lambda)) = F(\mu(\lambda)) + G_\lambda(\mu(\lambda)) \\
&= F(1) + F'(1) (\mu(\lambda) - 1) + o(|\mu(\lambda) - 1|) + G_\lambda(1)(1 + o(1))   \\
&= 1 + c_0 \lambda^{\frac{d-2s}{2}} + o(\lambda^\frac{d-2s}{2}). 
\end{align*} 
As a consequence, by  \eqref{dist proof} and the already established part (i) of the proposition, we obtain 
\begin{align*}
\dist(u_\lambda, \mathcal M)^2 &= \lVert \ds u_\lambda\rVert  _2^2 - S_d \mathsf m (u_\lambda) \\
& = 2 S_d + 2 S_d c_0 \lambda^\frac{d-2s}{2} - S_d ( 1 + c_0 \lambda^\frac{d-2s}{2})^2 + o(\lambda^\frac{d-2s}{2}) = S_d + o(\lambda^\frac{d-2s}{2}).  
\end{align*} 
This completes the proof of the proposition. 
 \end{proof}

 \section{Proof of Theorem \ref{theorem minimizer}}
 
In this section we give the proof of Theorem \ref{theorem minimizer}. We let $(u_n)$ be a normalized minimizing sequence for $c_{\mathrm{BE}}(s)$, i.e. 
\begin{equation}
\label{minimizing seq}
\mathcal E(u_n) = c_{\mathrm{BE}}(s) + o(1) \qquad \text{ as } n \to \infty, \qquad \qquad \lVert u_n\rVert  _{2^*} = 1.
\end{equation} 
Then 
\[ \lVert \ds u_n\rVert  _2^2 = (c_{\mathrm{BE}}(s) + o(1)) \dist(u_n, \mathcal M)^2 + S_d \leq (c_{\mathrm{BE}}(s) + o(1)) \lVert \ds u_n\rVert  _2^2 + S_d. \]
Since $c_{\mathrm{BE}}(s) \leq \frac{4s}{d+2s+2} < 1$ by \cite{ChFrWe}, this implies that $(u_n)$ is bounded in $\dot{H}^s(\R^d)$. 
By a theorem of Lions \cite{Lions} (see also \cite{Gerard}), up to translating and rescaling the sequence $(u_n)$, we may assume that $u_n \rightharpoonup f$ weakly in $\dot{H}^s(\R^d)$, for some non-zero $f$.  Letting $g_n := u_n - f$, we can thus write  
\begin{equation}
\label{u = f + g}
u_n = f + g_n,\qquad \text{ for some } f \in \dot{H}^s(\R^d) \setminus \{0\}, \quad g_n \rightharpoonup 0 \, \text{ in } \dot{H}^s(\R^d). 
\end{equation}

We first check that if the convergences are strong, then a minimizer of $c_{\mathrm{BE}}(s)$ must exist. 

\begin{proposition}
\label{proposition minimizer exists}
Let $(u_n)$ satisfy \eqref{minimizing seq} and \eqref{u = f + g}, and suppose that $g_n \to 0$ strongly in $\dot{H}^s(\R^d)$. Then $f$ is a minimizer for \eqref{bianchi-egnell}. 
\end{proposition}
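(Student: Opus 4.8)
The plan is to pass to the limit in the three quantities $\lVert \ds u_n\rVert_2^2$, $\lVert u_n\rVert_{2^*}^2$ and $\dist(u_n, \mathcal M)^2$, using that $g_n \to 0$ strongly in $\dot H^s(\R^d)$. First, strong convergence of $g_n$ to $0$ in $\dot H^s$ immediately gives $u_n \to f$ strongly in $\dot H^s(\R^d)$, hence $\lVert \ds u_n\rVert_2 \to \lVert \ds f\rVert_2$. By the Sobolev embedding $\dot H^s(\R^d) \hookrightarrow L^{2^*}(\R^d)$, strong $\dot H^s$-convergence also yields $u_n \to f$ strongly in $L^{2^*}(\R^d)$, so $\lVert u_n\rVert_{2^*} \to \lVert f\rVert_{2^*} = 1$ (using the normalization in \eqref{minimizing seq}); in particular $f \neq 0$, consistent with \eqref{u = f + g}.

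Next I would handle $\dist(u_n,\mathcal M)$. Using the reformulation from Lemma \ref{lemma distance}, $\dist(u_n,\mathcal M)^2 = \lVert \ds u_n\rVert_2^2 - S_d\,\mathsf m(u_n)$, it suffices to show $\mathsf m(u_n) \to \mathsf m(f)$. The bound $|(u_n, h^{2^*-1})^2 - (f, h^{2^*-1})^2| \le \lVert u_n - f\rVert_{2^*} \lVert h\rVert_{2^*}^{2^*-1}\big(\lVert u_n\rVert_{2^*} + \lVert f\rVert_{2^*}\big)\lVert h\rVert_{2^*}^{2^*-1}$, valid uniformly over $h \in \mathcal M_1$ since $\lVert h\rVert_{2^*} = 1$ for all $h \in \mathcal M_1$, shows that $h \mapsto (u_n, h^{2^*-1})^2$ converges to $h \mapsto (f, h^{2^*-1})^2$ uniformly on $\mathcal M_1$; taking suprema gives $\mathsf m(u_n) \to \mathsf m(f)$. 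Combining with the convergence of $\lVert \ds u_n\rVert_2^2$ yields $\dist(u_n,\mathcal M)^2 \to \lVert \ds f\rVert_2^2 - S_d \mathsf m(f) = \dist(f,\mathcal M)^2$. Moreover this limit is strictly positive: if it were $0$ then $f \in \mathcal M$, but then $\lVert \ds u_n\rVert_2^2 - S_d\lVert u_n\rVert_{2^*}^2 \to \lVert\ds f\rVert_2^2 - S_d\lVert f\rVert_{2^*}^2 = 0$ while $\dist(u_n,\mathcal M)^2 \to 0$, and one must check this is incompatible with $\mathcal E(u_n) \to c_{\mathrm{BE}}(s) > 0$ — indeed $\mathcal E(u_n) \ge c_{\mathrm{BE}}(s)$ forces $\lVert\ds u_n\rVert_2^2 - S_d\lVert u_n\rVert_{2^*}^2 \ge c_{\mathrm{BE}}(s)\dist(u_n,\mathcal M)^2$, and dividing shows the ratio cannot converge while the denominator vanishes unless $f \notin \mathcal M$; so $f \in \dot H^s(\R^d)\setminus\mathcal M$ and $\mathcal E(f)$ is well-defined.

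Finally, putting the three limits together, $\mathcal E(u_n) \to \mathcal E(f)$, and since $\mathcal E(u_n) \to c_{\mathrm{BE}}(s)$ we get $\mathcal E(f) = c_{\mathrm{BE}}(s)$, i.e. $f$ is a minimizer. The only genuinely delicate point is ensuring $\dist(f,\mathcal M) > 0$ so that $\mathcal E(f)$ makes sense — equivalently, that the strong limit of a minimizing sequence cannot land on $\mathcal M$; this is exactly where one uses that $c_{\mathrm{BE}}(s) > 0$ and a short argument by contradiction comparing the orders of vanishing of numerator and denominator. Everything else is a routine continuity argument once strong convergence is in hand.
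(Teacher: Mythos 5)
The routine continuity part of your argument is fine: strong $\dot H^s$-convergence gives convergence of $\lVert \ds u_n\rVert_2^2$, of $\lVert u_n\rVert_{2^*}^2$ (by Sobolev embedding), and of $\dist(u_n,\mathcal M)^2$ (your uniform-convergence argument for $\mathsf m$ is correct, and slightly more explicit than the paper's one-line assertion). You also correctly identify that the only delicate point is showing $f \notin \mathcal M$.

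However, your argument for that delicate point has a genuine gap. You claim that from $\mathcal E(u_n)\ge c_{\mathrm{BE}}(s)>0$ alone, ``dividing shows the ratio cannot converge while the denominator vanishes unless $f\notin\mathcal M$.'' This is false: a quotient of two quantities both tending to zero can perfectly well converge to a positive finite limit, and the inequality $\lVert\ds u_n\rVert_2^2 - S_d\lVert u_n\rVert_{2^*}^2 \ge c_{\mathrm{BE}}(s)\,\dist(u_n,\mathcal M)^2$ is entirely compatible with $\dist(u_n,\mathcal M)\to0$. Nothing about mere positivity of $c_{\mathrm{BE}}(s)$ rules out minimizing sequences converging into $\mathcal M$; ruling this out is precisely one of the two nontrivial compactness thresholds of the paper. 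The correct argument, as the paper gives, uses two external inputs: (i) the spectral estimate from \cite[Proposition~2]{ChFrWe}, which says that any sequence with $\dist(u_n,\mathcal M)\to0$ has $\liminf_n \mathcal E(u_n)\ge \tfrac{4s}{d+2s+2}$, and (ii) the \emph{strict} inequality $c_{\mathrm{BE}}(s) < \tfrac{4s}{d+2s+2}$ from \cite{Koenig}. Only the conjunction of these two facts produces a contradiction with $\mathcal E(u_n)\to c_{\mathrm{BE}}(s)$. You would need to invoke both; as written, the proposal cannot close.
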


\begin{proof}
If $u_n \to f$ strongly in $\dot{H}^s(\R^d)$, then it is clear that $\lVert \ds u_n\rVert  _2^2 \to \lVert \ds f\rVert  _2^2$ and $\dist(u_n, \mathcal M) \to \dist (f, \mathcal M)$. By Sobolev embedding, we also have $\lVert u_n\rVert  _{2^*} \to \lVert f\rVert  _{2^*}$. Thus  $\mathcal E(u_n) \to \mathcal E(f)$ and $f$ is a minimizer, provided that $\dist(f, \mathcal M) \neq 0$, i.e., that $f \notin \mathcal M$. 

But for sequences $(u_n)$ such that $\dist(u_n, \mathcal M) \to 0$, it is known, e.g. from \cite[Proposition 2]{ChFrWe}, that $\mathcal E(u_n) \geq \frac{4s}{d + 2s +2}$. On the other hand, the result in \cite{Koenig} guarantees that $\lim_{n \to \infty} \mathcal E(u_n) = c_{\mathrm{BE}}(s)  < \frac{4s}{d + 2s + 2}$. Hence the minimizing sequence $(u_n)$ cannot satisfy $\dist(u_n, \mathcal M) \to 0$. As explained above, this finishes the proof.
\end{proof}

The proof of Theorem \ref{theorem minimizer} now consists in showing that $g_n \to 0$ must in fact be the case. 

To do so, let us investigate how the components of $\mathcal E(u_n)$ behave under the decomposition \eqref{u = f + g}.  It is standard to check that the weak convergence implies 
\begin{equation}
\label{decomp gradient}
\lVert \ds u_n\rVert  _2^2 = \lVert \ds f\rVert  _2^2 + \lVert \ds g_n\rVert  _2^2 + o(1), 
\end{equation}
and that, using compact Sobolev embeddings  and the Brezis--Lieb lemma \cite{BrLi}, 
\begin{equation}
\label{decomp p norm}
 \int_{\R^d} |u_n|^{2^*} \diff x = \int_{\R^d} |f|^{2^*} \diff x + \int_{\R^d} |g_n|^{2^*} \diff x + o(1)
\end{equation}
along a subsequence, as $n \to \infty$. Finally, the following lemma gives the important information how the distance $\dist(u_n, \mathcal M)$ decomposes. Recall that by definition $\mathsf m(u) = \sup_{h \in \mathcal M_1} (u, h^{2^*-1})^2$. 

\begin{lemma}
\label{lemma max m}
Let $u_n$ satisfy \eqref{u = f + g}. 
As $n \to \infty$, we have 
\begin{equation}
\label{decomp dist lemma}
\mathsf m(u_n) =  \max \left\{ \mathsf m(f),  \mathsf m(g_n) \right\} + o(1). 
\end{equation} 
In particular, 
\begin{align}
\label{dist = lemma}
 \dist (u_n, \mathcal M)^2 &= \lVert \ds f\rVert  _2^2 + \lVert \ds g_n\rVert  _2^2   - S_d  \max \left\{\mathsf m(f), \mathsf m(g_n) \right\} + o(1) .
\end{align}
\end{lemma}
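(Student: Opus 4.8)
The plan is to prove Lemma \ref{lemma max m} by first establishing the key claim \eqref{decomp dist lemma} about $\mathsf m$, and then deriving \eqref{dist = lemma} as an immediate consequence by combining \eqref{decomp dist lemma} with the gradient splitting \eqref{decomp gradient} and the identity $\dist(\cdot,\mathcal M)^2 = \lVert \ds\, \cdot\rVert_2^2 - S_d\,\mathsf m(\cdot)$ from Lemma \ref{lemma distance}. So the whole content is in \eqref{decomp dist lemma}.

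\textbf{Upper bound $\mathsf m(u_n) \le \max\{\mathsf m(f),\mathsf m(g_n)\} + o(1)$.} By Lemma \ref{lemma distance}, $\mathsf m(u_n) = (u_n, h_n^{2^*-1})^2$ for some optimizing bubble $h_n = B_{x_n,\lambda_n} \in \mathcal M_1$. I would split $(u_n, h_n^{2^*-1}) = (f, h_n^{2^*-1}) + (g_n, h_n^{2^*-1})$ and argue that for each $n$ one of the two terms is essentially negligible compared to the other. The mechanism: if the sequence of bubbles $h_n$ stays in a compact region of the parameter space $(x,\lambda)$ (after passing to a subsequence, $h_n \to h$ in $\dot H^s$ and hence $h_n^{2^*-1} \to h^{2^*-1}$ in $(L^{2^*})'$), then $(g_n, h_n^{2^*-1}) \to 0$ because $g_n \rightharpoonup 0$, so $\mathsf m(u_n) = (f, h_n^{2^*-1})^2 + o(1) \le \mathsf m(f) + o(1)$. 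If instead $h_n$ escapes to the boundary ($\lambda_n \to 0$, $\lambda_n \to \infty$, or $|x_n| \to \infty$), then $h_n^{2^*-1} \rightharpoonup 0$ in $(L^{2^*})'$; since $f$ is a \emph{fixed} function, $(f, h_n^{2^*-1}) \to 0$, whence $\mathsf m(u_n) = (g_n, h_n^{2^*-1})^2 + o(1) \le \mathsf m(g_n) + o(1)$. Either way the claimed upper bound holds along the subsequence; a standard subsequence-of-every-subsequence argument upgrades it to the full sequence. The one subtlety here is that $h_n$ need not do exactly one of these two things uniformly in $n$, so I would phrase the dichotomy as: for every subsequence there is a further subsequence along which either $h_n$ converges in $\mathcal M_1$ or $h_n^{2^*-1} \rightharpoonup 0$ weakly in $(L^{2^*})'$ — and show these are the only possibilities by the explicit form of the bubbles.

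\textbf{Lower bound $\mathsf m(u_n) \ge \max\{\mathsf m(f),\mathsf m(g_n)\} + o(1)$.} Here I use test bubbles. For $\mathsf m(f)$: let $h$ be a fixed optimizer of $\mathsf m(f)$ (which exists by Lemma \ref{lemma distance}); then $(u_n, h^{2^*-1}) = (f,h^{2^*-1}) + (g_n, h^{2^*-1}) \to (f,h^{2^*-1})$ since $g_n \rightharpoonup 0$ and $h^{2^*-1}$ is fixed, so $\mathsf m(u_n) \ge (u_n,h^{2^*-1})^2 = \mathsf m(f) + o(1)$. For $\mathsf m(g_n)$: let $h_n \in \mathcal M_1$ optimize $\mathsf m(g_n)$. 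One needs $(f, h_n^{2^*-1}) \to 0$; this follows from the same dichotomy as above — if $h_n$ stays compact, pass to a subsequence and use $g_n \rightharpoonup 0$ to get $\mathsf m(g_n) = (g_n,h_n^{2^*-1})^2 \to 0$, so the inequality $\mathsf m(u_n) \ge \mathsf m(g_n) + o(1)$ is trivial in the limit; if $h_n$ escapes, then $(f,h_n^{2^*-1}) \to 0$ and $\mathsf m(u_n) \ge (u_n,h_n^{2^*-1})^2 = (g_n,h_n^{2^*-1})^2 + o(1) = \mathsf m(g_n) + o(1)$. Combining the two lower bounds gives $\mathsf m(u_n) \ge \max\{\mathsf m(f),\mathsf m(g_n)\} + o(1)$.

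\textbf{Main obstacle.} The delicate point throughout is the interaction term $(f, h_n^{2^*-1})$ when the bubbles $h_n$ run off to the boundary of parameter space: one must confirm that a fixed $\dot H^s$ (hence $L^{2^*}$) function pairs to zero against $h_n^{2^*-1}$, i.e. that $h_n^{2^*-1} \rightharpoonup 0$ in $(L^{2^*})'$, and separately that $\lVert h_n^{2^*-1}\rVert_{(L^{2^*})'} = \lVert B\rVert_{2^*}^{2^*-1}$ stays bounded (it is in fact constant by scaling). This is the fractional analogue of the classical "bubbles concentrate to a point or escape to infinity" dichotomy, and I would isolate it as a small standalone observation — essentially: for $h_n = B_{x_n,\lambda_n}$, if $(\lambda_n, x_n)$ leaves every compact subset of $(0,\infty)\times\R^d$ then $(\varphi, h_n^{2^*-1}) \to 0$ for all $\varphi \in L^{2^*}(\R^d)$, proved by density from $\varphi \in C_c(\R^d)$ together with the uniform bound on $\lVert h_n^{2^*-1}\rVert_{(L^{2^*})'}$. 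Once this is in hand, everything else is bookkeeping with $o(1)$ terms and the passage from subsequences to the full sequence.

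\begin{proof}
By Lemma \ref{lemma distance}, once \eqref{decomp dist lemma} is established, \eqref{dist = lemma} follows immediately from $\dist(u_n,\mathcal M)^2 = \lVert \ds u_n\rVert_2^2 - S_d \mathsf m(u_n)$ and the gradient splitting \eqref{decomp gradient}. We therefore focus on \eqref{decomp dist lemma}.

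We first record an elementary fact about escaping bubbles. If $h_n = B_{x_n, \lambda_n} \in \mathcal M_1$ is such that $(\lambda_n, x_n)$ leaves every compact subset of $(0,\infty) \times \R^d$, then
\begin{equation}
\label{escaping bubbles}
(\varphi, h_n^{2^*-1}) \to 0 \qquad \text{for every } \varphi \in L^{2^*}(\R^d).
\end{equation}
Indeed, $\lVert h_n^{2^*-1}\rVert_{(L^{2^*})'} = \lVert B_{x_n,\lambda_n}\rVert_{2^*}^{2^*-1} = 1$ is constant, so by density it suffices to check \eqref{escaping bubbles} for $\varphi \in C_c(\R^d)$, for which it follows from a direct change of variables and dominated convergence. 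Conversely, if $(\lambda_n, x_n)$ stays in a compact set, then along a subsequence $h_n \to h$ in $\dot H^s(\R^d)$ for some $h \in \mathcal M_1$, and hence $h_n^{2^*-1} \to h^{2^*-1}$ in $(L^{2^*})'$.

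\emph{Upper bound.} Fix an arbitrary subsequence. By Lemma \ref{lemma distance}, $\mathsf m(u_n) = (u_n, h_n^{2^*-1})^2$ for some optimizer $h_n = B_{x_n,\lambda_n} \in \mathcal M_1$. Passing to a further subsequence, either $(\lambda_n, x_n)$ stays in a compact set, or it escapes. In the first case $h_n^{2^*-1} \to h^{2^*-1}$ in $(L^{2^*})'$ for some $h \in \mathcal M_1$, so since $g_n \rightharpoonup 0$ in $\dot H^s(\R^d) \hookrightarrow L^{2^*}(\R^d)$,
\[
(u_n, h_n^{2^*-1}) = (f, h_n^{2^*-1}) + (g_n, h_n^{2^*-1}) = (f, h^{2^*-1}) + o(1),
\]
whence $\mathsf m(u_n) = (f,h^{2^*-1})^2 + o(1) \le \mathsf m(f) + o(1)$. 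In the second case \eqref{escaping bubbles} gives $(f, h_n^{2^*-1}) \to 0$ (as $f$ is fixed), so
\[
\mathsf m(u_n) = (g_n, h_n^{2^*-1})^2 + o(1) \le \mathsf m(g_n) + o(1).
\]
In both cases $\mathsf m(u_n) \le \max\{\mathsf m(f), \mathsf m(g_n)\} + o(1)$ along the further subsequence. Since the original subsequence was arbitrary, the inequality $\mathsf m(u_n) \le \max\{\mathsf m(f), \mathsf m(g_n)\} + o(1)$ holds along the full sequence.

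\emph{Lower bound.} Let $h \in \mathcal M_1$ be a (fixed) optimizer of $\mathsf m(f)$, which exists by Lemma \ref{lemma distance}. Since $g_n \rightharpoonup 0$ and $h^{2^*-1}$ is fixed,
\[
\mathsf m(u_n) \ge (u_n, h^{2^*-1})^2 = \left( (f, h^{2^*-1}) + (g_n, h^{2^*-1}) \right)^2 = \mathsf m(f) + o(1).
\]
On the other hand, let $h_n = B_{x_n,\lambda_n} \in \mathcal M_1$ optimize $\mathsf m(g_n)$. Fix a subsequence, and pass to a further one along which $(\lambda_n, x_n)$ either stays in a compact set or escapes. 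If it escapes, then by \eqref{escaping bubbles} $(f, h_n^{2^*-1}) \to 0$, so
\[
\mathsf m(u_n) \ge (u_n, h_n^{2^*-1})^2 = \left( (g_n, h_n^{2^*-1}) + o(1) \right)^2 = \mathsf m(g_n) + o(1).
\]
If it stays compact, then $h_n^{2^*-1} \to h^{2^*-1}$ in $(L^{2^*})'$ for some $h \in \mathcal M_1$, so $(g_n, h_n^{2^*-1}) \to 0$ and hence $\mathsf m(g_n) \to 0$; thus $\mathsf m(u_n) \ge 0 = \mathsf m(g_n) + o(1)$ holds trivially along this further subsequence. In either case $\mathsf m(u_n) \ge \mathsf m(g_n) + o(1)$ along the further subsequence, and since the original subsequence was arbitrary, $\mathsf m(u_n) \ge \mathsf m(g_n) + o(1)$ along the full sequence. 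Combining the two lower bounds gives $\mathsf m(u_n) \ge \max\{\mathsf m(f), \mathsf m(g_n)\} + o(1)$.

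Together the upper and lower bounds yield \eqref{decomp dist lemma}, and \eqref{dist = lemma} follows as explained above.
\end{proof}
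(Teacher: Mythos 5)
Your proof is correct and follows essentially the same route as the paper: the same dichotomy on the optimizing bubbles $h_n$ (parameters compact vs.\ escaping), the same use of weak convergence $g_n \rightharpoonup 0$ and weak vanishing $h_n^{2^*-1} \rightharpoonup 0$ in $(L^{2^*})'$ to kill the cross terms, and the same subsequence-of-every-subsequence upgrade. The only cosmetic difference is that you isolate the escaping-bubble vanishing as a standalone observation with a density proof, whereas the paper states it inline as an elementary fact.
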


\begin{proof}
By Lemma \ref{lemma distance}, $\mathsf m({u_n})$ has an optimizer $h_n \in \mathcal M_1$. 
We write $h_n(x) = \mu_n^\frac{d-2s}{2} B(\mu_n(x- x_n))$ and consider two different cases. 

Suppose first that $x_n$ is bounded and $\mu_n$ is bounded away from zero and infinity. Then up to a subsequence $x_n \to x_\infty \in \R^d$ and $\mu_n \to \mu_\infty \in (0,\infty)$, and consequently $h_n^{2^*-1} \to B_{x_\infty, \mu_\infty}^{2^*-1}$ strongly in $L^{(2^*)'}$. But this implies $(g_n, h_n^{2^*-1}) \to 0$ by weak convergence $g_n \rightharpoonup 0$. Thus 
\begin{equation}
\label{dist lemma leq m(f)}
 \mathsf m(u_n) = \left( (f, h_n^{2^*-1}) + (g_n, h_n^{2^*-1})\right)^2 = (f, h_n^{2^*-1})^2 + o(1) \leq \mathsf m(f) + o(1). 
\end{equation}
In the remaining, second case, we have $\mu_n \to 0$, $\mu_n \to \infty$ or $|x_n| \to \infty$ along a subsequence. This can be easily checked to yield $h_n^{2^*-1} \rightharpoonup 0$ in $L^{(2^*)'}$. Thus $(f, h_n^{2^*-1}) \to 0$ in that case, and we get 
\begin{equation}
\label{dist lemma leq m(g)}
 \mathsf m(u_n) = \left( (f, h_n^{2^*-1}) + (g_n, h_n^{2^*-1})\right)^2 = (g_n, h_n^{2^*-1})^2 + o(1) \leq \mathsf m(g_n) + o(1). 
\end{equation}
Combining \eqref{dist lemma leq m(f)} and \eqref{dist lemma leq m(g)}, we get 
\begin{equation}
\label{dist lemma leq max}
\mathsf m(u_n) \leq  \max \left\{ \mathsf m(f),  \mathsf m(g_n) \right\} + o(1), 
\end{equation} 
at least along some subsequence. But our argument shows that from any subsequence we can extract another subsequence such that the inequality \eqref{dist lemma leq max} holds. Thus \eqref{dist lemma leq max} must in fact hold for the entire sequence $(u_n)$. 

In order to establish \eqref{decomp dist lemma}, we will now prove the converse inequality by a similar argument. Let $h_f$ be the optimizer for $\mathsf m(f)$. Then $(g_n, h_f^{2^*-1}) \to 0$ by weak convergence $g_n \rightharpoonup 0$ and thus 
\begin{equation}
\label{dist lemma geq m(f)}
\mathsf m(u_n) \geq (u_n, h_f^{2^*-1})^2 = \left((f, h_f^{2^*-1}) + (g_n, h_f^{2^*-1})\right)^2 = \mathsf m(f) + o(1). 
\end{equation} 
Now, let $h_{g_n}$ be the optimizer for $\mathsf m(g_n)$. We write again $h_{g_n} = \mu_n^\frac{d-2s}{2} B(\mu_n(x - x_n))$ and consider two cases. Suppose first that $\mu_n \to 0$, $\mu_n \to \infty$ or $|x_n| \to \infty$ along a subsequence. Then, as above, $h_{g_n}^{2^*-1} \rightharpoonup 0$ in $L^{(2^*)'}(\R^d)$ and thus $(f, h_{g_n}^{2^*-1}) \to 0$. We obtain in that case
\begin{equation}
\label{dist lemma geq m(g)}
\mathsf m(u_n) \geq (u_n, h_{g_n}^{2^*-1})^2 = \left((f, h_{g_n}^{2^*-1}) + (g_n, h_{g_n}^{2^*-1})\right)^2 = \mathsf m(g_n) + o(1). 
\end{equation} 
If, on the other hand, $\mu_n$, $\mu_n^{-1}$ and $|x_n|$ are bounded, then up to a subsequence $\mu_n \to \mu_\infty \in (0, \infty)$ and $x_n \to x_\infty \in \R^d$. But then $\mathsf m(g_n) = (g_n, h_{g_n}^{2^*-1}) \to 0$ by weak convergence $g_n \rightharpoonup 0$, and so \eqref{dist lemma geq m(g)} holds trivially. 

By the same remark as in the first part of the proof, \eqref{dist lemma geq m(g)} holds in fact along the whole sequence $(u_n)$. Now by combining \eqref{dist lemma geq m(f)} and \eqref{dist lemma geq m(g)} with \eqref{dist lemma leq max}, inequality \eqref{decomp dist lemma} follows. 

Finally, \eqref{dist = lemma} is immediate from Lemma \ref{lemma distance} together with \eqref{decomp gradient} and \eqref{decomp dist lemma}. 
\end{proof}

The next lemma serves as an important preparation for our main argument. Contrary to \eqref{decomp gradient}, \eqref{decomp p norm} and \eqref{decomp dist lemma}, here the minimizing property of $(u_n)$ comes into play.

\begin{lemma}
\label{lemma m(f)=m(g)}
Let $(u_n)$ satisfy \eqref{minimizing seq} and \eqref{u = f + g}. Suppose that there is $c > 0$ such that $\lVert \ds g_n\rVert  _2^2 \geq c$.  Then $\mathsf m(f) = \mathsf m(g_n) + o(1)$ as $n \to \infty$. 
\end{lemma}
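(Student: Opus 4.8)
The plan is to argue by contradiction: suppose along a subsequence that $\mathsf m(f)$ and $\mathsf m(g_n)$ do \emph{not} agree in the limit, and derive that $\mathcal E(u_n)$ can be strictly lowered below $c_{\mathrm{BE}}(s)$, contradicting minimality. By Lemma \ref{lemma max m} and \eqref{decomp gradient}, \eqref{decomp p norm} we have the three decompositions
\[
\lVert \ds u_n \rVert_2^2 = \lVert \ds f \rVert_2^2 + \lVert \ds g_n \rVert_2^2 + o(1), \quad \lVert u_n \rVert_{2^*}^{2^*} = \lVert f \rVert_{2^*}^{2^*} + \lVert g_n \rVert_{2^*}^{2^*} + o(1), \quad \dist(u_n,\mathcal M)^2 = \lVert \ds f\rVert_2^2 + \lVert \ds g_n\rVert_2^2 - S_d \max\{\mathsf m(f),\mathsf m(g_n)\} + o(1).
\]
Write $\mathcal E(u_n)$ as a quotient $N_n / \dist(u_n,\mathcal M)^2$ where $N_n = \lVert \ds u_n\rVert_2^2 - S_d \lVert u_n\rVert_{2^*}^2$. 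The key point is that $N_n$ is \emph{not} additive because of the power $2$ on the $L^{2^*}$ norm; however, using $\lVert u_n\rVert_{2^*}^2 = (\lVert f\rVert_{2^*}^{2^*} + \lVert g_n\rVert_{2^*}^{2^*})^{2/2^*} + o(1)$ together with the strict concavity/convexity from Lemma \ref{lemma convexity} (applied with $p = 2^*$), one controls the superadditivity defect: $\lVert u_n\rVert_{2^*}^2 \le \lVert f\rVert_{2^*}^2 + \lVert g_n\rVert_{2^*}^2$, hence $N_n \ge (\lVert \ds f\rVert_2^2 - S_d\lVert f\rVert_{2^*}^2) + (\lVert \ds g_n\rVert_2^2 - S_d\lVert g_n\rVert_{2^*}^2) + o(1) = \dist(f,\mathcal M)^2 + \dist(g_n,\mathcal M)^2 + o(1)$.

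Next I would split into the two cases coming from $\max\{\mathsf m(f),\mathsf m(g_n)\}$. Suppose first $\mathsf m(g_n) \le \mathsf m(f) - \delta$ eventually for some $\delta > 0$. Then $\dist(u_n,\mathcal M)^2 = \lVert \ds f\rVert_2^2 - S_d \mathsf m(f) + \lVert \ds g_n\rVert_2^2 + o(1) = \dist(f,\mathcal M)^2 + \lVert \ds g_n\rVert_2^2 + o(1)$, which is strictly larger than $\dist(f,\mathcal M)^2$ by at least $c$. Now apply Lemma \ref{lemma quotient inequalities}: with $A = N_n^{(f)} := \lVert\ds f\rVert_2^2 - S_d\lVert f\rVert_{2^*}^2$-ish numerator, $B = \dist(f,\mathcal M)^2$ and the $g_n$-contributions playing the role of $C,D$ (and $E,F$ the full $u_n$ data), one gets that $\mathcal E(u_n)$ is squeezed between $\mathcal E(f)$ and a quantity $\ge \min\{\mathcal E(f), \dist(g_n,\mathcal M)^2/\lVert\ds g_n\rVert_2^2 \text{-type ratio}\}$; since the denominator strictly increased while the numerator gained only $\dist(g_n,\mathcal M)^2 \le \lVert\ds g_n\rVert_2^2$, one concludes $\mathcal E(u_n) \ge \min\{\mathcal E(f), 1\} + o(1)$, and more carefully that $\mathcal E(u_n)$ is bounded below away from $c_{\mathrm{BE}}(s)$ — because if $\mathcal E(f) \le c_{\mathrm{BE}}(s)$ then $f$ would already be a minimizer handled separately, and otherwise the strict gap propagates. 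The symmetric case $\mathsf m(f) \le \mathsf m(g_n) - \delta$ is handled the same way with the roles of $f$ and $g_n$ reversed, now using $\lVert\ds f\rVert_2^2 \ge \dist(f,\mathcal M)^2 > 0$ (since $f \ne 0$, $f \notin \mathcal M$ would need checking, but $\mathsf m(f) > 0$ always by Lemma \ref{lemma distance}) as the strictly-added mass in the denominator.

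The main obstacle I anticipate is not any single estimate but the bookkeeping in applying Lemma \ref{lemma quotient inequalities} cleanly: one must identify the six positive quantities $A,B,C,D,E,F$ so that the hypotheses $\frac AB \ge \frac CD \ge \frac EF$ and $D \le F$ hold, and to do so one needs the a priori bounds $\dist(f,\mathcal M)^2 \le \lVert\ds f\rVert_2^2$ and $\mathcal E(f) \ge c_{\mathrm{BE}}(s)$ (valid since $f \notin \mathcal M$, which in turn uses the hypothesis — via \cite{Koenig} and Proposition \ref{proposition minimizer exists} — or is built into the contradiction setup), plus the analogous facts for $g_n$. A secondary subtlety is ensuring that the $o(1)$ errors in the three decompositions do not swamp the fixed gap $\delta$ or $c$; this is fine because $\delta$ and $c$ are fixed while the errors vanish. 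Once the quotient lemma delivers $\mathcal E(u_n) \ge c_{\mathrm{BE}}(s) + \eta + o(1)$ for some $\eta = \eta(\delta,c) > 0$ in each case, this contradicts $\mathcal E(u_n) \to c_{\mathrm{BE}}(s)$, so no such $\delta$ exists and $\mathsf m(f) = \mathsf m(g_n) + o(1)$.
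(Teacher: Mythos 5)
Your proposal identifies the right ingredients (contradiction, the three decompositions, Lemma~\ref{lemma quotient inequalities}, Lemma~\ref{lemma convexity}), but it does not actually close the argument. The essential missing idea is the construction of a \emph{comparison competitor}. The paper's proof rescales so that $\|\tilde g_n\|_{2^*}=1$, writes $\mathcal E(\tilde u_n)$ as $(A+C)/(B+D)$ with $A/B = \lim\mathcal E(g_n)\geq c_{\mathrm{BE}}(s)$ and $C,D$ coming from $\tilde f_n$, and then replaces $\tilde f_n$ by the scalar multiple $F_n$ whose $\mathsf m$-value matches $\mathsf m(\tilde g_n)$. Because $\mathsf m(\tilde g_n)>\mathsf m(\tilde f_n)$ this forces $\|F_n\|_{2^*}>\|\tilde f_n\|_{2^*}$ strictly, and the \emph{strict} monotonicity of $\eta\mapsto \bigl((1+\eta^{2^*})^{2/2^*}-1\bigr)/\eta^2$ from Lemma~\ref{lemma convexity} then yields $C/D > E/F$ and $D<F$. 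Lemma~\ref{lemma quotient inequalities} converts this into the strict drop $\mathcal E(\tilde u_n) > \lim\mathcal E(F_n+\tilde g_n) \geq c_{\mathrm{BE}}(s)$, which is the contradiction. Your plan never produces this strict inequality: you observe $A/B\geq c_{\mathrm{BE}}(s)$ and hand-wave a ``squeeze'' to $\mathcal E(u_n)\geq c_{\mathrm{BE}}(s)+\eta$, but the facts $\mathcal E(f)\geq c_{\mathrm{BE}}(s)$ and $\mathcal E(g_n)\geq c_{\mathrm{BE}}(s)$ are perfectly compatible with $\mathcal E(u_n)\to c_{\mathrm{BE}}(s)$ even when $\mathsf m(f)\neq\lim\mathsf m(g_n)$, so no contradiction follows without the rescaled competitor.

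There is also a concrete computational error in the first block: you write $\|\ds f\|_2^2 - S_d\|f\|_{2^*}^2 = \dist(f,\mathcal M)^2$, but by Lemma~\ref{lemma distance} one has $\dist(f,\mathcal M)^2 = \|\ds f\|_2^2 - S_d\,\mathsf m(f)$, and in general $\mathsf m(f) < \|f\|_{2^*}^2$ (Hölder gives $\mathsf m(f)\leq\|f\|_{2^*}^2$ with equality only on $\mathcal M_1$). So the numerator you compute is the Sobolev deficit, \emph{not} the squared distance, and the inequality between them goes the opposite way from what a squeeze toward a lower bound would need. This error compounds the structural problem: with $A,B,C,D,E,F$ not correctly identified, the hypotheses of Lemma~\ref{lemma quotient inequalities} cannot be verified, which you yourself flag as ``the main obstacle.'' That obstacle is in fact the heart of the proof, and it is resolved by the rescaling-to-match-$\mathsf m$ trick, not by bookkeeping alone.
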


\begin{proof}
Assume that, up to extracting a subsequence, 
\begin{equation}
\label{m(g) > m(f)}
\lim_{n \to \infty} \mathsf m(g_n) > \mathsf m(f) 
\end{equation} 
strictly. Multiplying by a constant, we may equivalently consider $\tilde{u}_n = \frac{f}{\lVert g_n\rVert  _{2^*}} + \frac{g_n}{\lVert g_n\rVert  _{2^*}} =: \tilde{f}_n + \tilde{g}_n$ with $\lVert \tilde{g}_n\rVert  _{2^*} = 1$. Notice that $\lVert g_n\rVert  _{2^*}$ is bounded away from zero and $\infty$, hence $\lVert \tilde{f}_n\rVert  _{2^*}$ is. Then, by \eqref{decomp gradient}, \eqref{decomp p norm} and Lemma \ref{lemma max m},
\begin{align*}
\mathcal E(\tilde{u}_n) &= \frac{\lVert \ds \tilde{g}_n\rVert  _2^2 - S_d + \lVert \ds \tilde{f}_n\rVert  _2^2 - S_d \left( \left(1 + \int_{\R^d} |\tilde{f}_n|^{2^*} \diff x  \right)^{\frac{2}{2^*}} - 1  \right) }{\lVert  \ds \tilde{g}_n\rVert  _2^2 - S_d \,  \mathsf m(\tilde{g}_n) + \lVert \ds \tilde{f}_n\rVert  _2^2} + o(1). 
\end{align*}
Our goal is now to estimate the quotient using Lemma \ref{lemma quotient inequalities}. Suppose for the moment that $\tilde{g}_n \notin \mathcal M$ for all $n$. Then set  
\begin{align*}
A&:= \lim_{n \to \infty} \lVert \ds \tilde{g}_n\rVert  _2^2 - S_d, \qquad B:= \lim_{n \to \infty} \lVert  \ds \tilde{g}_n\rVert  _2^2 - S_d \, \mathsf m(\tilde{g}_n)^2, \\ 
C&:= \lim_{n \to \infty} \lVert \ds \tilde{f}_n\rVert  _2^2 - S_d \left( \left(1 + \int_{\R^d} |\tilde{f}_n|^{2^*} \diff x \right)^{2/2^*}  - 1  \right), \qquad D:= \lim_{n \to \infty} \lVert \ds \tilde{f}_n\rVert  _2^2. 
\end{align*}
Notice that $A, B, C, D > 0$ because we assume $\tilde{g}_n \notin \mathcal M$ and because $\lVert \ds \tilde{f}_n\rVert  _2^2$ is bounded away from zero.  Since $ c_{\mathrm{BE}}(s) = \lim_{n \to \infty} \mathcal E(\tilde{u}_n) = \frac{A + C}{B + D}$ and $\frac{A}{B} =\lim_{n \to \infty} \mathcal E(g_n) \geq c_{\mathrm{BE}}(s)$, we must have $\frac{C}{D} \leq c_{\mathrm{BE}}(s)$. 

Now let $F_n$ be the scalar multiple of $\tilde{f}_n$ such that $\mathsf m(F_n) = \mathsf m(\tilde{g}_n)$. Then, as a consequence of \eqref{m(g) > m(f)}, $\lim_{n \to \infty} \lVert F_n\rVert  _{2^*} > \lim_{n \to \infty} \lVert \tilde{f}_n\rVert  _{2^*}$ strictly. By Lemma \ref{lemma convexity}, the function $\eta \mapsto \frac{(1 + \eta^{2^*})^{\frac{2}{2^*}}-1}{\eta^2}$ is strictly increasing, so that
\[ \frac{C}{D} = 1 - \lim_{n \to \infty} \frac{S_d \left( \left(1 + \lVert \tilde{f}_n\rVert  _{2^*}^{2^*} \right)^{\frac{2}{2^*}} \diff x - 1  \right)}{\mathcal S[\tilde{f}_n] \lVert \tilde{f}_n\rVert  _{2^*}^2 } > 1 - \lim_{n \to \infty} \frac{S_d \left( \left(1 + \lVert F_n\rVert  _{2^*}^{2^*} \right)^{\frac{2}{2^*}} \diff x - 1  \right)}{\mathcal S[\tilde{f}_n] \lVert F_n\rVert  _{2^*}^2 } =: \frac{E}{F}. \]
Since $D \leq F$ and $\frac{A}{B} \geq \frac{C}{D} > \frac{E}{F}$, Lemma \ref{lemma quotient inequalities} yields 
\begin{align*}
c_{\mathrm{BE}}(s)= \lim_{n \to \infty} \mathcal E(u_n)= \lim_{n \to \infty} \mathcal E(\tilde{u}_n) = \frac{A + C}{B + D} > \frac{A + E}{B + F} = \lim_{n \to \infty} \mathcal E(F_n + \tilde{g}_n). 
\end{align*}
But this contradicts the definition of $c_{\mathrm{BE}}(s)$. Hence \eqref{m(g) > m(f)} is impossible. 

If, on the other hand, $\tilde{g}_n \in \mathcal M$ along some subsequence, then $A = B = 0$ in the above and we directly conclude a contradiction in the same way from $\frac{C}{D} > \frac{E}{F}$. 
 
The remaining case to treat is that where, up to a subsequence, 
\[ \mathsf m(f) > \lim_{n \to \infty} \mathsf m(g_n) . \]
But here one arrives at a contradiction in a similar fashion, with the roles of $f$ and $g_n$ reversed and considering $\tilde{u}_n = \frac{f}{\lVert f\rVert  _{2^*}} + \frac{g_n}{\lVert f\rVert  _{2^*}} =: \tilde{f} + \tilde{g}_n$. The fact that $D:= \lim_{n \to \infty} \lVert \ds g_n\rVert  _2^2 > 0$ is guaranteed here by assumption. The rest of the proof is identical to the above.
\end{proof}

We are now ready to prove our second main result. 

\begin{proof}
[Proof of Theorem \ref{theorem minimizer}]
Let $(u_n)$ be a minimizing sequence satisfying \eqref{minimizing seq} and \eqref{u = f + g}. Suppose for contradiction that $g_n = u_n - f$ does not converge strongly to zero in $\dot{H}^s(\R^d)$. Then, after passing to a subsequence, we have $\lVert \ds g_n\rVert  _2^2 \geq c$ for some $c > 0$. Thus Lemma \ref{lemma m(f)=m(g)} asserts that \begin{equation}
\label{m(f) = m(g) proof thm}
\mathsf m(f) = \mathsf m(g_n) + o(1).
\end{equation}

Suppose first that $\lVert g_n\rVert  _{2^*} \leq \lVert f\rVert  _{2^*} + o(1)$. As in the proof of Lemma \ref{lemma m(f)=m(g)}, we may moreover assume that $\lVert f\rVert  _{2^*} = 1$ by multiplying with a suitable scalar factor. Due to \eqref{m(f) = m(g) proof thm} and Lemma \ref{lemma max m} we may write 
\[ \dist(u_n, \mathcal M)^2 = \lVert \ds f\rVert  _2^2 + \lVert \ds g_n\rVert  _2^2 - S_d \, \mathsf m(f) + o(1).  \]
Together with \eqref{expansion gradient} and \eqref{expansion p norm}, we thus obtain
\begin{align*}
c_{\mathrm{BE}}(s) +o (1) = \frac{\lVert \ds f\rVert  _2^2 - S_d + \lVert \ds g_n\rVert  _2^2 - S_d \left( \left( 1 + \lVert g_n\rVert  _{2^*}^{2^*} \right)^\frac{2}{2^*} - 1 \right)}{\lVert  \ds f\rVert  _2^2 -  S_d \, \mathsf m(f) + \lVert \ds g_n \rVert  _2^2 }
\end{align*}
Similarly to the proof of Lemma \ref{lemma m(f)=m(g)}, since by \eqref{bianchi-egnell}
\[ \frac{\lVert \ds f\rVert  _2^2 - S_d}{\lVert  \ds f\rVert  _2^2 -  S_d \, \mathsf m(f)} \geq c_{\mathrm{BE}}(s), \]
and since $\lVert \ds g_n\rVert  _2^2 \geq c$, we must have
\begin{align} c_{\mathrm{BE}}(s) + o(1) &\geq \frac{\lVert \ds g_n\rVert  _2^2 - S_d \left( \left( 1 + \lVert g_n\rVert  _{2^*}^{2^*} \right)^\frac{2}{2^*} - 1 \right)}{\lVert \ds g_n \rVert  _2^2} \label{chain of ineqs proof} \\
& = 1 - \frac{S_d \left( \left( 1 + \lVert g_n\rVert  _{2^*}^{2^*} \right)^\frac{2}{2^*} - 1 \right)}{\mathcal S(g_n) \lVert g_n\rVert  _{2^*}^2}  \geq 1 - \frac{S_d}{\mathcal S(g_n)} (2^\frac{2}{2^*} - 1), \nonumber
\end{align}
where the last inequality follows from Lemma \ref{lemma convexity} together with $\lVert g_n\rVert  _{2^*} \leq 1$. (Recall that $\mathcal S(g)$ is the Sobolev quotient defined in \eqref{sobolev quotient definition}.) Since we know by Theorem \ref{theorem strict ineq 2 bubbles} that $c_{\mathrm{BE}} < 2 - 2^\frac{2}{2^*}$ with strict inequality, we find, for $n$ large enough, 
that 
\[ 1 - \frac{S_d}{\mathcal S(g_n)} (2^\frac{2}{2^*} - 1) <   2 - 2^\frac{2}{2^*},  \]
which is equivalent to $\mathcal S(g_n) < S_d$. But this contradicts the definition of $S_d$. 

If we assume instead the reverse inequality $\lVert f\rVert  _{2^*} \leq \lVert g_n\rVert  _{2^*} + o(1)$, we obtain a contradiction by writing 
\[ \dist(u_n, \mathcal M)^2 = \lVert \ds f\rVert  _2^2 + \lVert \ds g_n\rVert  _2^2 - S_d \, \mathsf m(g_n) + o(1) \]
and arguing in exactly the same way with the roles of $f$ and $g_n$ reversed. 

Thus we have shown that $g_n$ must converge strongly to zero in $\dot{H}^s(\R^d)$. By Proposition \ref{proposition minimizer exists}, the proof of Theorem \ref{theorem minimizer} is now complete. 
\end{proof}
 
In the previous proof, notice that it is the crucial information \eqref{m(f) = m(g) proof thm} from Lemma \ref{lemma m(f)=m(g)} which allows us to express $\dist(u_n, \mathcal M)^2$ with the help of either $\mathsf m(f)$ or $\mathsf m(g_n)$. This is what permits us to reverse the roles of $f$ and $g_n$, in other words to assume an inequality between $\lVert f\rVert  _{2^*}$ and $\lVert g_n\rVert  _{2^*}$ without loss of generality. 
 
\begin{remark}
\label{remark two bubbles}
The following remark may help to gain some more intuition about the proof of Theorem \ref{theorem minimizer}. If we did not have Theorem \ref{theorem strict ineq 2 bubbles} available, but only the non-strict inequality $c_{\mathrm{BE}} \leq 2 - 2^\frac{2}{2^*}$, then the chain of inequalities \eqref{chain of ineqs proof} would still imply that $\lVert g_n\rVert  _{2^*} \to 1$ and $\mathcal S(g_n) \to S_d$, that is, $\dist(g_n, \mathcal M_1) \to 0$. By $\lVert f\rVert  _{2^*} = 1$, $\mathsf m(f) = \mathsf m(g_n)$ and the equality condition in Hölder's inequality we would then also have $f \in \mathcal M_1$. Thus the (weaker) conclusion would be in this case that either $(u_n)$ converges strongly or, up to rescaling and translation $u_n = B + B_n + o(1)$ for a sequence $(B_n) \subset \mathcal M_1$ interacting weakly with $B$. 
\end{remark}

\appendix

\section{Some computations}

The following lemma was needed in the proof of Proposition \ref{proposition expansion u lambda}. 

\begin{lemma}
\label{lemma F and G}
Let $F$ and $G_\lambda$ be defined by \eqref{F, G definition}. Then 
\[ F'(1) = 0, \quad F''(1) = a_d \]
and 
\[ G_\lambda(1) = c_0 \lambda^\frac{d-2s}{2} + o(\lambda^\frac{d-2s}{2}), \qquad   G'_\lambda(1) = b_d \lambda^\frac{d-2s}{2} + o(\lambda^\frac{d-2s}{2}),\]
for $c_0 = B(0) \int_{\R^d} B^{2^*-1} \diff x$ and
\begin{equation}
\label{a and b}
a_d = -c_d^{2^*} \frac{d-2s}{2(d+1)} \frac{\Gamma (\frac{d+2}{2}) \Gamma (\frac{d}{2})}{\Gamma(d+1)}, \qquad b_d = - c_d^{2^*}  \frac{d-2s}{2(d+2s)} \frac{\Gamma(\frac{d}{2})\Gamma(s)}{\Gamma(\frac{d+2s}{2})}.
\end{equation}
Here $c_d$ is the normalization constant in \eqref{B definition normalized}. 
\end{lemma}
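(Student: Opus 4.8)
The plan is to turn all four quantities into explicit radial integrals over $\R^d$ by rescaling, and then read off the asymptotics by dominated convergence (for the $G_\lambda$-terms) or by evaluating Beta integrals (for the $F$-terms). First I would substitute $y=\mu x$ and use $(2^*-1)\tfrac{d-2s}{2}=\tfrac{d+2s}{2}$ to write
\[
F(\mu)=\mu^{-\frac{d-2s}{2}}J(\mu),\qquad J(\mu):=c_d^{2^*}\int_{\R^d}\Bigl(1+\tfrac{|y|^2}{\mu^2}\Bigr)^{-\frac{d-2s}{2}}(1+|y|^2)^{-\frac{d+2s}{2}}\,\diff y,
\]
so that $F(1)=J(1)=\lVert B\rVert_{2^*}^{2^*}=1$, and likewise $G_\lambda(\mu)=\mu^{-\frac{d-2s}{2}}\lambda^{\frac{d-2s}{2}}\int_{\R^d}B(\lambda y/\mu)\,B(y)^{2^*-1}\,\diff y$. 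For $\mu$ in a fixed neighbourhood of $1$ the integrands and all their $\mu$-derivatives are dominated by fixed $L^1(\R^d)$ functions, so $F$ and $G_\lambda$ are $C^\infty$ near $\mu=1$ and may be differentiated under the integral sign; this smoothness also underlies the Taylor expansion used in \eqref{mu - 1 estimate}.

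For $F'(1)=0$ I would invoke Hölder's inequality: $F(\mu)=(B,B_\mu^{2^*-1})\le\lVert B\rVert_{2^*}\lVert B_\mu^{2^*-1}\rVert_{(2^*)'}=\lVert B\rVert_{2^*}\lVert B_\mu\rVert_{2^*}^{2^*-1}=1=F(1)$, with equality iff $B$ is proportional to $B_\mu$, i.e.\ $\mu=1$. Hence $\mu=1$ is a strict maximum of the smooth function $F$, so $F'(1)=0$ (one also sees this directly from $F'(1)=-\tfrac{d-2s}{2}J(1)+J'(1)$ together with $J'(1)=\tfrac{d-2s}{2}J(1)$, which follows from the Beta identity below). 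For $F''(1)$ I would differentiate $F=\mu^{-\frac{d-2s}{2}}J$ twice at $\mu=1$ and use $J'(1)=\tfrac{d-2s}{2}J(1)$ to write $F''(1)$ as an explicit linear combination of $J(1)$ and $J''(1)$. Differentiating $J$ under the integral and writing $|y|^2=(1+|y|^2)-1$ reduces $J(1)$, $J''(1)$ to the standard identity
\[
\int_{\R^d}|y|^{2k}(1+|y|^2)^{-b}\,\diff y=\pi^{d/2}\,\frac{\Gamma(\tfrac d2+k)\,\Gamma(b-\tfrac d2-k)}{\Gamma(\tfrac d2)\,\Gamma(b)}\qquad\Bigl(b>\tfrac d2+k\Bigr),
\]
and collecting Gamma factors yields the stated value $a_d$. (Note $a_d<0$, consistent with $\mu=1$ being a strict maximum; only $a_d\neq0$ is actually used afterwards.)

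For the $G_\lambda$-terms I would use that $0\le B(\lambda y)\le B(0)$ and $B(\lambda y)\to B(0)$ pointwise as $\lambda\to0$. Since $B^{2^*-1}=c_d^{2^*-1}(1+|y|^2)^{-\frac{d+2s}{2}}\in L^1(\R^d)$ (which uses $s>0$), dominated convergence applied to $\lambda^{-\frac{d-2s}{2}}G_\lambda(1)=\int_{\R^d}B(\lambda y)B(y)^{2^*-1}\,\diff y$ gives the limit $B(0)\int_{\R^d}B^{2^*-1}=c_0$, which is the claimed expansion of $G_\lambda(1)$. For $G_\lambda'(1)$, differentiating under the integral gives
\[
G_\lambda'(1)=\lambda^{\frac{d-2s}{2}}\!\left[-\tfrac{d-2s}{2}\int_{\R^d}B(\lambda y)B(y)^{2^*-1}\,\diff y-\int_{\R^d}\lambda y\cdot(\nabla B)(\lambda y)\,B(y)^{2^*-1}\,\diff y\right].
\]
In the second integral the factor $\lambda y\cdot(\nabla B)(\lambda y)$ is a bounded function of $\lambda|y|$, uniformly in $\lambda$, and tends to $0$ pointwise, so that integral is $o(1)$ by dominated convergence, while the first integral tends to $c_0$; thus $G_\lambda'(1)=b_d\lambda^{\frac{d-2s}{2}}+o(\lambda^{\frac{d-2s}{2}})$ with $b_d$ a fixed multiple of $c_0$, and writing out $c_0$ via the same Beta identity puts $b_d$ in the stated closed form.

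The conceptual content here is slight; the obstacle I anticipate is entirely computational---carefully justifying the $L^1$ domination that legitimises differentiating under the integral and using dominated convergence, and then carrying the Gamma functions through the (routine but lengthy) algebra so as to arrive at the precise constants $a_d$, $b_d$ rather than merely their signs and their order of magnitude in $\lambda$.
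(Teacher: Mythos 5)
Your argument is essentially correct and takes a genuinely cleaner route than the paper's in two places. To show $F'(1)=0$ you invoke Hölder's inequality and the fact that $\mu=1$ is a strict maximizer of the smooth function $F$, whereas the paper differentiates the radial integral and verifies by an explicit Beta-integral computation that two terms cancel; your argument is more conceptual. For $G_\lambda'(1)$, the substitution $G_\lambda(\mu)=\mu^{-(d-2s)/2}\lambda^{(d-2s)/2}\int_{\R^d} B(\lambda y/\mu)B(y)^{2^*-1}\,\diff y$ isolates the leading coefficient cleanly: after differentiation, the integral containing $\lambda y\cdot(\nabla B)(\lambda y)$ is $o(1)$ by dominated convergence (this factor is bounded uniformly in $\lambda$ and tends to $0$ pointwise), leaving $G_\lambda'(1)=-\tfrac{d-2s}{2}c_0\lambda^{(d-2s)/2}+o(\lambda^{(d-2s)/2})$ with no further Beta integral needed. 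The paper, by contrast, differentiates the radial integral directly and evaluates two Beta integrals. Your treatment of $F''(1)$, $G_\lambda(1)$ and the $L^1$ domination agrees with the paper in substance.

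One caveat about the constants: your $b_d=-\tfrac{d-2s}{2}c_0$ does \emph{not} literally reproduce \eqref{a and b} once $c_0=B(0)\int_{\R^d}B^{2^*-1}$ is written out via the Beta identity; it differs by a factor of $\tfrac{d+2s}{2}|\Sph^{d-1}|$. The reason is that the appendix computation passes to the one-dimensional radial integral without the surface measure $|\Sph^{d-1}|$ and appears also to drop the factor $\tfrac{d+2s}{2}$ arising from differentiating $\mu^{(d+2s)/2}$ in $B_\mu^{2^*-1}$; an analogous remark applies to $a_d$. None of this affects how Lemma~\ref{lemma F and G} is used in Proposition~\ref{proposition expansion u lambda}, which only needs $F'(1)=0$, $F''(1)\neq0$, $G_\lambda(1)=c_0\lambda^{(d-2s)/2}+o(\cdot)$ and $G_\lambda'(1)\lesssim\lambda^{(d-2s)/2}$. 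But your closing assertion that the Beta identity ``puts $b_d$ in the stated closed form'' should be replaced by the observation that $b_d=-\tfrac{d-2s}{2}c_0$, which is the internally consistent form; reconciling it with \eqref{a and b} requires restoring the two omitted factors.
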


\begin{proof}
We compute 
\[
F'(\mu) = \int_0^\infty r^{d-1}  B(r) \left(  B(\mu r)^{2^* - 1} + \frac{2}{d+2s}  B(\mu r)^{2^* -2}  B'(\mu r) \mu r \right) \diff r .
\]
Hence 
\begin{align*}
c_d^{-2^*} F'(1)&= \int_0^\infty r^{d-1} (1 + r^2)^{-d} \diff r - 2 \int_0^\infty r^{d+1} (1 + r^2)^{-d-1}  \diff r \\
&= \frac{1}{2} \frac{\Gamma(\frac{d}{2})^2}{\Gamma(d)} - \frac{\Gamma(\frac{d+2}{2}) \Gamma(\frac{d}{2})}{\Gamma(d+1)} = 0. 
\end{align*} 
Next, we compute 
\begin{align*}
& \quad F''(\mu) = \frac{\diff }{\diff \mu} \left(\mu^{-d} \int_0^\infty r^{d-1} B(\mu^{-1} r) \left( B(r)^\frac{d+2s}{d-2s} + \frac{2}{d-2s} B^\frac{4s}{d-2s}(r) B'(r) r \right) \diff r \right)  \\
&= - \mu^{-d-1} \int_0^\infty r^{d-1} \left(  d   B(\mu^{-1} r) +   B'(\mu^{-1} r) \mu^{-1} r \right)  \left( B(r)^\frac{d+2s}{d-2s} + \frac{2}{d-2s} B^\frac{4s}{d-2s}(r) B'(r) r \right) \diff r. \\
\end{align*}
Therefore 
\begin{align*}
c_d^{-2^*} F''(1) &= - d F'(1) - c_d^{-2^*}   \int_0^\infty r^{d} \left( B^\frac{d+2s}{d-2s} B'(r)  + \frac{2}{d-2s} B^\frac{4s}{d+2s}(r) B'(r)^2 r \right) \diff r \\
&= (d -2s)  \int_0^\infty \frac{r^{d+1}}{(1 + r^2)^{d+1}} \diff r - 2(d - 2s) \frac{r^{d+3}}{(1 + r^2)^{d+2}} \diff r   \\
&= \frac{d-2s}{2} \frac{\Gamma(\frac{d+2}{2}) \Gamma(\frac{d}{2})}{\Gamma(d+1)} - (d - 2s) \frac{\Gamma (\frac{d}{2}+2) \Gamma (\frac{d}{2})}{\Gamma(d+2)} = - \frac{d-2s}{2(d+1)} \frac{\Gamma (\frac{d+2}{2}) \Gamma (\frac{d}{2})}{\Gamma(d+1)}. 
\end{align*}
Now let us turn to $G_\lambda(\mu)$. First, 
\[ G_\lambda(1) = (B_\lambda, B^{2^*-1}) = \lambda^\frac{d-2s}{2} B(0) \int_{\R^d} B^{2^* - 1} \diff x  + o(\lambda^\frac{d-2s}{2}).\] 
Finally, we have 
\[
G'_\lambda(\mu) = \int_0^\infty r^{d-1}  B_\lambda(r) \left(  B(\mu r)^{2^* - 1} + \frac{2}{d+2s}  B(\mu r)^{2^* -2}  B'(\mu r) \mu r \right) \diff r
\]
and so
\begin{align*}
& \quad  c_d^{-2^*} G'_\lambda(1) \\
 &= c_d^{-2^*} \lambda^\frac{d-2s}{2} \left( \int_0^\infty r^{d-1} B(\lambda r) B(r)^\frac{d+2s}{d-2s}\diff r + \frac{2}{d-2s} \int_0^\infty r^{d-1}  B(\lambda r) B^\frac{4s}{d-2s}(r) B'(r) r \diff r \right) \\
&= \lambda^\frac{d-2s}{2} (1 + o(1))\left( \int_0^\infty \frac{r^{d-1} }{(1+ r^2)^\frac{d+2s}{2}} \diff r  - 2 \int_0^\infty \frac{r^{d+1}}{(1 + r^2)^\frac{d+2s+2}{2}} \diff r  \right) \\
&= \lambda^\frac{d-2s}{2} (1 + o(1)) \left( \frac 12 \frac{\Gamma(\frac{d}{2})\Gamma(s)}{\Gamma(\frac{d+2s}{2})} - \frac{\Gamma(\frac{d+2}{2})\Gamma(s) }{\Gamma(\frac{d+2s+2}{2})} \right) \\
&=- \frac{d-2s}{2(d+2s)} \frac{\Gamma(\frac{d}{2})\Gamma(s)}{\Gamma(\frac{d+2s}{2})} \lambda^\frac{d-2s}{2} + o(\lambda^\frac{d-2s}{2}).
\end{align*}
This completes the proof.
\end{proof}

\end{document}